% nladoc.tex V2.0, 13 May 2010

\documentclass[times]{nlaauth}

\usepackage{moreverb}

\usepackage[colorlinks,bookmarksopen,bookmarksnumbered,citecolor=red,urlcolor=red]{hyperref}

\newcommand\BibTeX{{\rmfamily B\kern-.05em \textsc{i\kern-.025em b}\kern-.08em
T\kern-.1667em\lower.7ex\hbox{E}\kern-.125emX}}

\usepackage[english]{babel}                     	% German + English language
\usepackage[utf8]{inputenc}
\usepackage{amsmath,amsfonts,amssymb,amsthm,amsxtra} 		% AMS mathematical package
\usepackage{bbm}                    					 	% alternative blackboard characters
\usepackage{graphicx}               					 	% graphics
\usepackage{caption}										% captions
\usepackage{subcaption}										% subcaptions
\usepackage{algpseudocode}									% algorithm envoronment, pseudocode
\usepackage{framed}											% nice looking frames
\usepackage{mathtools}										% more maths symbols, like \coloneqq etc.
\usepackage{tikz}											% draw pictures
\usepackage{bm}												% bold maths characters
\usepackage{upgreek}										% upright greek letters
\usepackage{enumitem}										% nice enumerations
\usepackage{chngcntr}										% resetting of counters
\usepackage{diagbox}										% table cells with diagonal subdivision
\usepackage{array}											% extra options for tables
\usepackage[section]{algorithm}								% algorithm envoronment (counted by chapter)

\numberwithin{equation}{section}

\usepackage[colorlinks,bookmarksopen,bookmarksnumbered,citecolor=red,urlcolor=red]{hyperref}

% Theorems etc.
\theoremstyle{plain}

\newtheorem{lemma}[algorithm]{Lemma}
\newtheorem{corollary}[algorithm]{Corollary}

% Definitions and Examples
\theoremstyle{definition}
\newtheorem{definition}[algorithm]{Definition}

% Remarks
\theoremstyle{remark}
\newtheorem{remark}[algorithm]{Remark}

% Custom commands
 							% absolute value
 					% absolute value (big)
 					% absolute value (Big)
 				% absolute value (bigg)
 				% absolute value (Bigg)
\newcommand{\norm}[1]{\lVert#1\rVert} 							% norm
\newcommand{\normb}[1]{\big\lVert#1\big\rVert} 					% norm (big)
 					% norm (Big)
 				% norm (bigg)
 				% norm (Bigg)
\newcommand{\dotprod}[2]{\langle#1,#2\rangle} 					% dot product
\newcommand{\dotprodb}[2]{\big\langle#1,#2\big\rangle} 			% dot product (big)
 			% dot product (Big)
 		% dot product (bigg)
 		% dot product (Bigg)
\DeclareMathOperator{\rank}{rank}								% rank as an operator
\DeclareMathOperator{\grad}{grad}								% grad as an operator
\DeclareMathOperator{\tr}{tr}								% trace as an operator
\DeclareMathOperator{\Span}{span}								% span as an operator
\DeclareMathOperator{\Hess}{Hess}								% Hessian as an operator
								% diag as an operator
							% blkdiag as an operator
						% arg min as an operator
						% arg max as an operator
\DeclareMathOperator{\Proj}{P}                                  % projection as an operator
\DeclareMathOperator{\Deriv}{D}                                  % derivative as an operator
\DeclareMathOperator{\id}{id}                                  % identity operator
\DeclareMathOperator{\dist}{dist}								% rank as an operator
\DeclareMathOperator{\Exp}{Exp}								% exponential map as an operator
\newcommand{\Mf}{{\mathcal{M}}}       							% manifold
\newcommand{\Mfr}{{\mathcal{M}_\mathbf{r}}}       							% low-rank manifold
\newcommand{\T}{{\mathrm{T}}}       							% transpose
\newcommand{\R}{{\mathbb{R}}}       							% real numbers
       							% natural numbers

\begin{document}

\runningheads{Gennadij~Heidel~and~Volker~Schulz}{A Riemannian trust-region method for low-rank tensor completion}

\title{A Riemannian trust-region method for low-rank tensor completion}

\author{Gennadij~Heidel\corrauth~and~Volker Schulz}

\address{Fachbereich IV - Mathematik, Universit\"at Trier, 54286 Trier, Germany}

\corraddr{heidel@uni-trier.de}

\begin{abstract}
The goal of tensor completion is to fill in missing entries of a partially known tensor (possibly including some noise) under a low-rank constraint. This may be formulated as a least-squares problem. The set of tensors of a given multilinear rank is known to admit a Riemannian manifold structure, thus methods of Riemannian optimization are applicable.

In our work, we derive the Riemannian Hessian of an objective function on the low-rank tensor manifolds using the
Weingarten map, a concept from differential geometry. We discuss the convergence properties of Riemannian
trust-region methods based on the exact Hessian and standard approximations, both theoretically and numerically. We compare our approach to Riemannian tensor completion methods from recent literature, both in terms of convergence behaviour and computational complexity. Our examples include the completion of randomly generated data with and without noise and recovery of
multilinear data from survey statistics.
\end{abstract}

\keywords{Riemannian optimization; multilinear rank; low-rank tensors; Tucker decomposition; Riemannian Hessian; trust-region methods}

\maketitle

\vspace{-6pt}

\section{Introduction} \label{sec:introduction}

In this paper, we discuss optimization techniques on the manifold of tensors of a given rank. We consider least-squares problems of the 
form
\begin{equation} \label{eqn:problem}
	\begin{split}
		&\min_\mathbf{X}\, f(\mathbf{X}) = \frac{1}{2} \normb{\Proj_\varOmega\mathbf{X}-\Proj_\varOmega\mathbf{A}}^2\\
		\text{s.\,t.} ~ &\mathbf{X} \in \Mfr \coloneqq \big\{ \mathbf{X}
		\in \R^{n_1\times\dotsb\times n_d} \, \big| \, \rank(\mathbf{X}) = \mathbf{r}\big\},
	\end{split}
\end{equation}
where $\rank(\mathbf{X}) \in \R^d$ denotes the multilinear rank of a tensor $\mathbf{X}$, and
$\Proj_\varOmega: \R^{n_1\times\dotsb\times n_d}\rightarrow \R^{n_1\times\dotsb\times n_d}$ is a linear operator. A typical choice found in the literature is
\begin{equation*}
	[\Proj_\varOmega\mathbf{X}]_{i_1\dotsc i_d} \coloneqq \begin{cases}x_{i_1\dotsc i_d}&\text{if~} (i_1,\dotsc,i_d) \in \varOmega,\\
	0 &\text{otherwise},\end{cases}
\end{equation*}
where $\varOmega \subset \{1,\dotsc,n_1\}\times\dotsb\times\{1,\dotsc,n_d\}$ denotes the sampling set, i.\,e. we assume
that ${\mathbf{A}\in\R^{n_1\times\dotsb\times n_d}}$ is a tensor whose entries with indices in $\varOmega$ are known.

The tensor completion problem is a generalization of the matrix completion problem, see the page by Ma et al. \cite{ma2017low} for an overview of methods and applications
in the context of convex optimization. Early work on tensor completion has been done by Liu et al. \cite{liu2013tensor}, who consider the problem
\begin{equation} \label{eqn:problem_liu}
		\min_\mathbf{X}\, \norm{\mathbf{X}}_* \text{\;\;\;s.\,t.\;\,} \Proj_\varOmega\mathbf{X}=\Proj_\varOmega\mathbf{A}
\end{equation}
in the context of image data recovery, where $\norm{\,\cdot\,}_*$ is a generalized nuclear norm. Note that \eqref{eqn:problem_liu} can be viewed as the
dual of \eqref{eqn:problem}. It ensures convexity for the tensor completion problem at the cost of losing the underlying manifold structure of low-rank
tensors. Specifically, it does not give a low-rank solution in the presence of noise, i.\,e. if $\mathbf{A} \notin \Mfr$;
in this case, an additional routine may be needed to truncate the result to low rank. Signoretto et al. \cite{signoretto2010nuclear} and Gandy et al. \cite{gandy2011tensor} choose a Tikhonov-like approach by minimizing the
a penalized unconstrained function
\begin{equation*}
		\min_\mathbf{X}\, \frac{1}{2} \normb{\Proj_\varOmega\mathbf{X}-\Proj_\varOmega\mathbf{A}}^2 + \frac{\mu}{2} \norm{\mathbf{X}}_*.
\end{equation*}

A Riemannian CG method for \eqref{eqn:problem} has been proposed by Kressner et al. \cite{kressner2014low}, which is an extension of Vandereycken's
earlier work \cite{vandereycken2013low} for the matrix completion problem. The authors show rapid linear convergence of their method with satisfactory
reconstruction of missing data for a range of applications. Other Riemannian approaches for matrix completion include the work by Ngo/Saad
\cite{ngo2012scaled} and Mishra et al. \cite{mishra2014fixed}, who use a product Graßmann quotient manifold structure.

In recent research, second-order methods in Riemannian optimization have generated considerable interest in order to find superlinearly converging
methods, see the overview by Absil et al. \cite[Chapters~6--8]{absil2008optimization} and the references therein. Boumal/Absil \cite{boumal2011rtrmc} apply 
these techniques to matrix completion in the Graßmannian framework. Vandereycken \cite[Subsection~2.3]{vandereycken2013low} derives the Hessian
for Riemannian matrix completion with an explicit expression of the singular values. In the higher-order tensor case, Eldén/Savas \cite{elden2009newton}
propose a Newton method for computing a rank-$\mathbf{r}$ tensor approximation, using a Graßmannian approach. Ishteva et al. \cite{ishteva2011best} extend these ideas to construct a Riemannian trust-region scheme.

In this paper, we propose a Riemannian trust-region scheme for \eqref{eqn:problem} using explicit Tucker decompositions and 
compare it to a state-of-the-art Riemannian CG as used in \cite{kressner2014low}. We derive the exact expression of the Riemannian Hessian on $\Mfr$ for this manifold geometry by 
using the Weingarten map proposed by Absil et al. \cite{absil2013extrinsic}. Our work focuses on the application case of tensor completion and
contains tensor approximation as the special case of full sampling, i.\,e. $| \varOmega | = \prod_i n_i$.

The rest of the paper is organized as follows. In Section~\ref{sec:tensors}, we cite some basic results about tensor 
arithmetic and the manifold of low-rank tensors. In Section~\ref{sec:geometry}, we present a brief overview of Riemannian
optimization and prove our main result, the Riemannian Hessian on $\Mfr$. In Section~\ref{sec:models}, we explain
the Riemannian trust-region methods based on exact and approximate Hessian evaluations. In Section~\ref{sec:experiments},
we present the some numerical experiments for our method on synthetic data and a standard test data set from multilinear statistics.

\section{Low-rank tensors} \label{sec:tensors}

In this section, we collect some basic concepts and results on the Tucker decomposition and multilinear rank of tensors 
needed for our work. First, we define notations and results of general tensor arithmetic, as laid out in the survey paper 
\cite{kolda2009tensor}. Then, we introduce the manifold geometry of $\Mfr$, see \cite{uschmajew2013geometry,koch2010dynamical,kressner2014low}.

\subsection{Multilinear rank and Tucker decomposition} \label{subsec:tucker}

For a tensor $\mathbf{A}\in \R^{n_1\times\dotsb\times n_d}$, the matrix
\begin{equation*}
	A_{(i)} \in \R^{n_i\times\prod_{j\neq i}n_j},
\end{equation*}
such that the row index of $A_{(i)}$ is the $i$th modes of $\mathbf{A}$ and the column index is a multi-index of the remaining $d-1$ 
modes, in lexicographic order, is called the \textit{mode-$i$ matricization} of $\mathbf{A}$. It may be viewed as a $d$-order generalization 
of the matrix transpose, since, for $d=2$, it holds that $A_{(1)}=A$ and $A_{(2)}=A^\T$. We denote the re-tensorization of 
a matricized tensor by a superscript index, i.\,e. $(A_{(i)})^{(i)}=\mathbf{A}$.

The \textit{multilinear rank} of a tensor $\mathbf{A}$ is the $d$-tuple
\begin{equation*}
	\rank(\mathbf{A}) = \big( \rank(A_{(1)}), \dotsc, \rank(A_{(d)}) \big),
\end{equation*}
with $\rank(\,\cdot\,)$ on the right-hand side of the equation denoting the matrix rank. In contrast to the matrix case, the ranks of
different matricizations of a tensor may be different, e.\,g. consider $\mathbf{A}\in\R^{2\times 2\times 2}$, given by its mode-$1$ matricization
\begin{equation*}
	A_{(1)}= \begin{bmatrix}
	1&0&0&0\\
	0&1&0&0
	\end{bmatrix}.
\end{equation*}
Then, the other matricizations are
\begin{equation*}
	A_{(2)} = A_{(1)}, ~ A_{(3)} = \begin{bmatrix}
	1&0&0&1\\
	0&0&0&0
	\end{bmatrix},
\end{equation*}
so clearly $\rank(\mathbf{X}) = (2,2,1)$.

The \textit{$i$-mode product} of $\mathbf{A}$ with a matrix $M \in \R^{m\times n_i}$ is defined as
\begin{equation*}
	\mathbf{B} = \mathbf{A} \times_i M \iff B_{(i)} = MA_{(i)}, ~ \mathbf{B} \in
	\R^{n_1\times\dotsb\times n_{i-1}\times m \times n_{i+1}\dotsb\times n_d}.
\end{equation*}
It is worth noting that, for different modes, the order of multiplications is irrelevant, i.\,e.
\begin{equation} \label{eqn:diff_modes}
	\mathbf{A} \times_i M \times_j N = \mathbf{A} \times_j N \times_i M \text{\;\;\;if\;\,} i \neq j.
\end{equation}
If the modes are equal, then
\begin{equation} \label{eqn:same_mode}
	\mathbf{A} \times_i M \times_i N = \mathbf{A} \times_i (NM).
\end{equation}

A Frobenius inner product on $\R^{n_1\times\dotsb\times n_d}$ is given by
\begin{equation*}
	\dotprod{\mathbf{A}}{\mathbf{B}} \coloneqq \tr\big(A_{(1)}^\T B_{(1)}\big) =
	\dotsb = \tr\big(A_{(d)}^\T B_{(d)}\big) = 
	\sum_{i_1=1}^{n_1}\dotsi\sum_{i_d=1}^{n_d}a_{i_1\dotso i_d}b_{i_1\dotso i_d},
\end{equation*}
with the induced norm $\norm{\mathbf{A}} \coloneqq \sqrt{\dotprod{\mathbf{A}}{\mathbf{A}}}$.

A tensor $\mathbf{X}$ with $\rank(\mathbf{X})=\mathbf{r}=(r_1,\dotsc,r_d)$ can be represented in the \textit{Tucker decomposition} \cite{tucker1966some}
\begin{equation} \label{eqn:tuckerdecomp}
	 \mathbf{X} = \mathbf{C} \times_1 U_1 \dotsb \times_d U_d = \mathbf{C} \bigtimes_{i=1}^d U_i,
\end{equation}
with a \textit{core tensor} $\mathbf{C} \in \R^{r_1\times\dotsb\times r_d}$ with $\rank(\mathbf{C}) = \mathbf{r}$ and \textit{basis 
matrices} $U_i \in \R^{n_i\times r_i}$ with linearly independent columns. Without loss of generality, it can be assumed that the
basis matrices have orthonormal columns, i.\,e. $U_i^\mathrm{T}U_i=I$. If for some $i$ this is not the case, a QR factorization
$U_i = \widetilde{U}_i R$, with $\widetilde{U}_i$ orthonormal and $R$ regular and $\widetilde{\mathbf{C}}=(RC_{(i)})^{(i)}$ gives
the required property.

A rank-$\mathbf{r}$ approximation to a tensor $\mathbf{A}$ can be computed by the \textit{truncated higher-order SVD (HOSVD)} 
\cite{lathauwer2000multilinear}: Let $\Proj_{r_i}^i$ be a the best rank-$r_i$ approximation operator in the $i$th mode, i.\,e.
$\Proj_{r_i}^i \mathbf{A}  = (U_i U_i^\mathrm{T} A_{(i)})^{(i)}$, where $U_i$ denotes the matrix of the $r_i$ dominant left
singular vectors of $A_{(i)}$. Then the rank-$\mathbf{r}$ truncated HOSVD operator
$\Proj_\mathbf{r}^\mathrm{HO}$ is given by
\begin{equation} \label{eqn:hosvd}
	\Proj_\mathbf{r}^\mathrm{HO} \mathbf{A} \coloneqq \Proj_{r_1}^1 \dotsm \Proj_{r_d}^d \mathbf{A}.
\end{equation}

In contrast to the matrix case, the HOSVD does not yield a best rank-$\mathbf{r}$ approximation, but only a quasi-best-approximation
\cite[Property 10]{lathauwer2000multilinear} with a constant which deteriorates with respect to the number of modes:
\begin{equation} \label{eqn:quasi_best}
	\normb{\mathbf{A}-\Proj_\mathbf{r}^\mathrm{HO} \mathbf{A}} \leq
	\sqrt{d} \min_{\mathbf{X}\in\Mf_\mathbf{r}} \norm{\mathbf{A}-\mathbf{X}}.
\end{equation}

\subsection{Riemannian manifold structure} \label{subsec:manifold}

In \cite{uschmajew2013geometry}, the authors show that the set $\Mf_\mathbf{r}$ of tensors of fixed multilinear rank
$\mathbf{r}=(r_1,\dotsc,r_d)$ forms a smooth embedded submanifold of $\R^{n_1\times\dotsb\times n_d}$. By counting the degrees of 
freedom in \eqref{eqn:tuckerdecomp}, it follows that 
\begin{equation*}
	\dim(\Mfr) = \prod_{i=1}^d r_i + \sum_{i=1}^d r_i n_i - r_i^2,
\end{equation*}
where the last term accounts for the fact that the Tucker decomposition is invariant to simultaneous transformation of the basis matrix
with an invertible matrix and the core tensor with its inverse; as described in the previous subsection. Being a submanifold 
of the Euclidean space $(\R^{n_1\times\dotsb\times n_d},\dotprod{\,\cdot\,}{\,\cdot\,})$, the manifold
$\Mfr$ can be endowed with a Riemannian structure in a natural way with the Frobenius inner product
$\dotprod{\,\cdot\,}{\,\cdot\,}$ as the Riemannian metric. 

As is proven in \cite[Subsection 2.3]{koch2010dynamical}, the tangent space of $\Mfr$ at
$\mathbf{X}=\mathbf{C}\bigtimes_{i=1}^d U_i$ is parametrized as
\begin{equation}
	T_\mathbf{X}\Mfr = \bigg\{ \dot{\mathbf{C}}\bigtimes_{i=1}^d U_i +
	\sum_{i=1}^d \mathbf{C} \times_i \dot{U}_i \bigtimes_{j \neq i}U_j \,\bigg|\,
	\dot{\mathbf{C}} \in \R^{r_1\times\dotsb\times r_d},
	~ \dot{U}_i \in \R^{n_i\times r_i} \text{~with~} \dot{U}_i^\mathrm{T} U_i = O \bigg\},
\end{equation}
and the orthogonal projection
$\Proj_\mathbf{X}:\R^{n_1\times\dotsb\times n_d} \rightarrow T_\mathbf{X}\Mfr$ is given by
\begin{equation} \label{eqn:orth_proj}
	\mathbf{A} \mapsto \bigg(\mathbf{A}\bigtimes_{j=1}^d U_j^\mathrm{T}\bigg)\bigtimes_{i=1}^d U_i
	+ \sum_{i=1}^d \mathbf{A} \times_i
	\Bigg( \Proj_{U_i}^\perp
	\bigg[ \mathbf{A} \bigtimes_{j \neq i}U_j^\mathrm{T} \bigg]_{(i)} C_{(i)}^+ \Bigg) \bigtimes_{k \neq i}U_k,
\end{equation}
where $C_{(i)}^+$ denotes the Moore-Penrose pseudoinverse of $C_{(i)}$. Note that $C_{(i)}$ has full row rank, i.\,e. 
$C_{(i)}^+=C_{(i)}^\T(C_{(i)}C_{(i)}^\T)^{-1}$. We use $\Proj_{U_i}^\perp = I_{n_i}-U_i U_i^\T$ to denote the orthogonal projection
onto $\Span(U_i)^\perp$.

Furthermore, it can be shown that the HOSVD \eqref{eqn:hosvd} is locally a $C^\infty$ function in the manifold topology of $\Mfr$, see
\cite[Proposition 2.1]{kressner2014low} for further details. This allows us its use in continuous optimization, as we will se in the next section.

\section{The geometry of $\Mfr$ and Riemannian optimization}  \label{sec:geometry}

To construct optimization methods on $\Mfr$, we collect some basic concepts from the theory of optimization on
manifolds. Our exposition follows the overview book \cite{absil2008optimization}. Furthermore, we need to define
and calculate the first and second derivatives of functions on $\Mfr$. In Corollary~\ref{cor:curvature_term}, we prove
our main result, an explicit expression for the Riemannian Hessian on $\Mfr$. In the following, we will denote a
Riemannian manifold by $\Mf$ and its elements by $x,y,\dotsc\in\Mf$, when citing general results, and the manifold of 
tensors of fixed multilinear rank by $\Mfr$ and its elements by $\mathbf{X},\mathbf{Y},\dotsc\in\Mfr$.

\subsection{Retraction and vector transport} \label{subsec:retr_transp}

Since a manifold is in general not a linear space, the calculations required for a continuous optimization method need to be performed in a tangent
space. Therefore, in each step, the need arises to map points from a tangent space to the manifold in order to generate the new
iterate. The theoretically superior choice of such a mapping is the \textit{exponential map}, which moves a point $x$ on the 
manifold along the geodesic locally defined by a vector in the tangent space $T_x\Mf$. However, computing the 
exponential map is prohibitively expensive in most situations, and it is shown in \cite{absil2008optimization} that a first-order
approximation, as specified in the following definition, is sufficient for many convergence results. 
\begin{definition}
	A \textit{retraction} on a manifold $\Mf$ is a smooth mapping $R$ from the tangent bundle $T\Mf$ onto $\Mf$
	with the following properties. Let $R_x$ denote the restriction of $R$ to $T_x\Mf$.
	\begin{enumerate}[label=(\roman*)]
		\item $R_x(0_x)=x$, where $0_x$ denotes the zero element of $T_x\Mf$.
		\item With the canonical identification $T_{0_x}T_x\Mf \simeq T_x\Mf$, the mapping $R_x$ satisfies
		the \textit{rigidity condition}
		\begin{equation*}
			\Deriv R_x(0_x) = \mathrm{id}_{T_x\Mf},
		\end{equation*}
		where $\mathrm{id}_{T_x\Mf}$ denotes the identity mapping on $T_x\Mf$.
	\end{enumerate}
\end{definition}

Furthermore, ``comparing'' tangent vectors at distinct
points on the manifold will be useful. The following definition gives us a way to transport a tangent vector $\xi \in T_x\Mf$ to the tangent
space $T_{R_x(\eta)}\Mf$ for some $\eta\in T_x\Mf$ and some retraction $R$.
\begin{definition}
	A \textit{vector transport} on a manifold $\Mf$ is a smooth mapping
	\begin{equation*}
		T\Mf \oplus T\Mf \rightarrow T\Mf: (\eta,\xi) \mapsto \mathcal{T}_{\eta}(\xi),
	\end{equation*}
	satisfying the following properties for all $x \in \Mf$:
	\begin{enumerate}[label=(\roman*)]
		\item (Associated retraction) There exists a retraction $R$, called the \textit{retraction associated with}
		$\mathcal{T}$, such that, for all $\eta, \xi$, it holds that $\mathcal{T}_{\eta}\xi \in T_{R_x(\eta)}\Mf$.
		\item (Consistency) $\mathcal{T}_{0_x}\xi = \xi$ for all $\xi \in T_x\Mf$.
		\item (Linearity) The mapping $\mathcal{T}_{\eta}:T_x\Mf \rightarrow T_{R_x(\eta)}\Mf,~
		\xi \mapsto\mathcal{T}_{\eta}\xi$ is linear.
	\end{enumerate}
\end{definition}

For $\Mf=\Mfr$, a retraction is given by the HOSVD, i.\,e.
$R_\mathbf{X}(\xi)=\Proj_\mathbf{r}^\mathrm{HO} (\mathbf{X}+\xi)$. This is a consequence of the smoothness of the HOSVD (cf. Subsection~\ref{subsec:manifold}) and the quasi-best approximation property \eqref{eqn:quasi_best}. Details may be found in \cite[Proposition~3]{kressner2014low}. A vector transport associated with a retraction $R$ is given by the orthogonal projection onto the tangent space, i.\,e.
$\mathcal{T}_\eta(\xi) = \Proj_{R_\mathbf{X}(\eta)}(\xi)$, see \cite[Subsection~8.1.3]{absil2008optimization};
in our case, this is the formula \eqref{eqn:orth_proj}. The efficient
implementation of these operations is discussed in \cite[Subsections~3.3--3.4]{kressner2014low}. A geometrical 
interpretation is shown in Figure~\ref{fig:retr_transp}.

\begin{figure}
	\begin{tabular}{ccc}
		\hspace{10mm}&
		\begin{tikzpicture}[scale=0.35]
			\draw [line width=0.5mm, black ] (0,0) to [bend left=45] (7.8,6.3)
			to [bend left=45] (14,-1.5);
			\draw [line width=0.5mm, black ] (1.035,3) to [bend left=55] (7.1,1.7)
			to [bend left=12] (8.2,-2.6);
			\draw [line width=0.5mm, black ] (8.2,-2.57) to [bend left=25] (14,-1.47);
			\draw [line width=0.5mm, black ] (0,0) to [bend left=12] (6.1,3);
			\draw [->,line width=0.5mm](7,5) -- ++(4,-1);
			\draw [line width=0.3mm] (4,4) -- ++(10,-2.5);
			\draw [line width=0.3mm] (4,4) -- ++(2.5,4);
			\draw [line width=0.3mm] (6.5,8) -- ++(10,-2.5);
			\draw [line width=0.3mm] (14,1.5) -- ++(2.5,4);
			\draw [dotted,line width=0.5mm] (7,5) to [bend left=25] node{.}(11,1);
			\filldraw (7,5) circle (4pt);
			\filldraw (11,1) circle (4pt);
			\node at (6,5.5) {$\mathbf{X}$};
			\node at (12.5,0) {$R_\mathbf{X}(\xi)$};
			\node at (11.5,3.5) {$\xi$};
			\node at (6.5,-1) {$\Mfr$};
			\node at (3.5,7.5) {$T_{\mathbf{X}} \Mfr$};
			\draw[opacity=0] (0,0) --(20,0);
		\end{tikzpicture}&
		\begin{tikzpicture}[scale=0.35]
			\draw [line width=0.5mm, black ] (0,0) to [bend left=45] (7.8,6.3)
			to [bend left=45] (14,-1.5);
			\draw [line width=0.5mm, black ] (1.035,3) to [bend left=55] (7.1,1.7)
			to [bend left=12] (8.2,-2.6);
			\draw [line width=0.5mm, black ] (8.2,-2.57) to [bend left=25] (14,-1.47);
			\draw [line width=0.5mm, black ] (0,0) to [bend left=12] (6.1,3);
			\draw [->,line width=0.5mm](7,5) -- ++(4,-1);
			\draw [line width=0.3mm] (4,4) -- ++(10,-2.5);
			\draw [line width=0.3mm] (4,4) -- ++(2.5,4);
			\draw [line width=0.3mm] (6.5,8) -- ++(10,-2.5);
			\draw [line width=0.3mm] (14,1.5) -- ++(2.5,4);
			\draw [dotted,line width=0.5mm] (7,5) to [bend left=25] node{.}(11,1);
			\filldraw (7,5) circle (4pt);
			\filldraw (11,1) circle (4pt);
			\node at (6,5.5) {$\mathbf{X}$};
			\node at (12.5,0) {$R_\mathbf{X}(\eta)$};
			\node at (11.5,3.5) {$\eta$};
			\node at (6.5,-1) {$\Mfr$};
			\node at (3.5,7.5) {$T_{\mathbf{X}} \Mfr$};
			\draw [->,line width=0.5mm](7,5) -- ++(1,-1.5);
			\node at (6.5,4) {$\xi$};
			\draw [->,line width=0.5mm](11,1) -- ++(-1,-2);
			\node at (9,0.5) {$\mathcal{T}_\eta(\xi) $};
		\end{tikzpicture}
	\end{tabular}\\
	\caption{Retraction (left) and vector transport (right) on $\Mfr$.} \label{fig:retr_transp}
\end{figure}
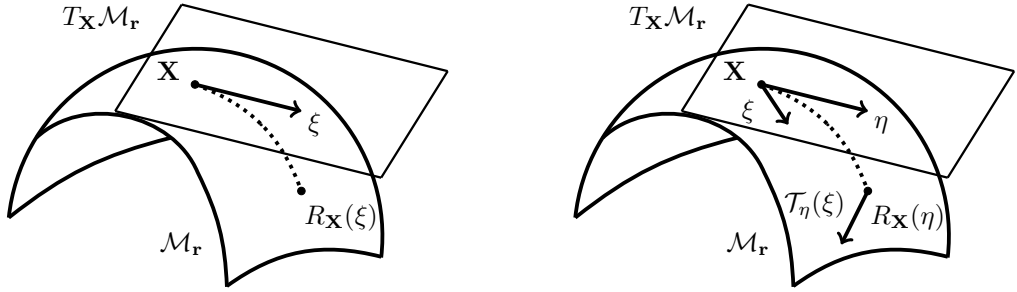

\subsection{The Riemannian gradient} \label{subsec:grad}

The low-rank Tucker manifold $\Mfr$ being a submanifold a Euclidean space, the gradient of a real-valued function
defined on it can be easily calculated by projecting the Euclidean gradient onto the tangent space.
\begin{lemma} \label{lem:riemann_grad}
	\cite[Section 3.6.1]{absil2008optimization} Let $\Mf$ be a Riemannian submanifold of a Euclidean space $E$. Let
	$\bar{f}:E \rightarrow \R$ be a function with Euclidean gradient $\grad \bar{f}(x)$ at point $x \in \Mf$. Then the
	Riemannian gradient of $f \coloneqq \bar{f}|_\Mf$ is given by $\grad f(x) = \Proj_x \grad \bar{f} (x)$, where $
	\Proj_x$ denotes the orthogonal projection onto the tangent space $T_x\Mf$.
\end{lemma}

Then, by Lemma \ref{lem:riemann_grad}, the Riemannian gradient of the tensor completion cost function is given by
\begin{equation}
	\grad f(\mathbf{X}) = \Proj_\mathbf{X}(\Proj_\varOmega\mathbf{X}-\Proj_\varOmega\mathbf{A}).
\end{equation}
Using the sparsity of $\Proj_\varOmega\mathbf{X}-\Proj_\varOmega\mathbf{A}$, a gradient evaluation requires $\mathcal{O}(r^d(|\varOmega|+n)+r^{d+1})$ operations,
cf. \cite[Subsection~3.1]{kressner2014low}, where we assume that the $r_i$ and $n_i$ are constant in each mode for simplicity of notation.

\subsection{The Riemannian Hessian} \label{subsec:hess}

By definition, the \textit{Riemannian Hessian} of a real-valued function $f$ on a Riemannian manifold $\Mf$ is a linear mapping
\begin{equation}
	\Hess f(x) [\xi] = \nabla_{\xi}\grad f(x),
\end{equation}
where $\nabla$ denotes the \textit{Riemannian connection} on $\Mf$, cf. \cite[Definition 5.5.1]{absil2008optimization}. A
finite-difference approximation can be defined in different ways. An intuitive formula is given by
\begin{equation} \label{eqn:fd}
	H^{\mathrm{FD}} [\xi] = \frac{\mathcal{T}_\xi\grad f(R_x(h\xi))-\grad f(x)}{h},
\end{equation}
see, for example, \cite[Subsection~8.2.1]{absil2008optimization}. However, such a mapping will in general not be linear
\cite{boumal2015riemannian}, and should be applied with care, as theoretical understanding is yet incomplete. 

On a Riemannian submanifold of a Euclidean space, the Riemannian connection is just the orthogonal projection of the directional 
derivative, i.\,e.
\begin{equation}
		\Hess f(x) [\xi_x] = \Proj_x\big(\Deriv \grad f(x)[\xi_x]\big),
\end{equation}
and using Lemma \ref{lem:riemann_grad}, we get the following result.

\begin{lemma} \label{lem:riemann_hess}
	\cite[Section 5.3.3]{absil2008optimization} Let $\Mf$ be a Riemannian submanifold of a Euclidean space $E$. Let
	$\bar{f}:E \rightarrow \R$ be a function with Euclidean gradient $\grad \bar{f}(x)$ at point $x \in \Mf$. Then the
	Riemannian Hessian of $f \coloneqq \bar{f}|_\Mf$ is given by
	\begin{equation} \label{eqn:lem_riemann_hess}
		\Hess f(x) [\xi_x] = \Proj_x \Deriv\big( \Proj_x \grad \bar{f} (x)\big).
	\end{equation}
\end{lemma}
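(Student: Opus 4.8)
The proof is essentially a one-line substitution once two preliminary facts are in place, so the plan is to record those facts carefully and then combine them. The first fact, already recalled in the text immediately preceding the statement, is that on a Riemannian submanifold $\Mf$ of a Euclidean space $E$ the Levi-Civita connection $\nabla$ is the tangential part of the ambient connection; since $E$ is flat, the ambient connection of a vector field is just its ordinary directional derivative, so
\begin{equation*}
	\nabla_{\xi_x}\eta = \Proj_x\big(\Deriv\eta(x)[\xi_x]\big)
\end{equation*}
for any tangent vector field $\eta$ on $\Mf$ and $\xi_x\in T_x\Mf$, where on the right $\eta$ denotes an arbitrary smooth local extension to a neighbourhood of $x$ in $E$. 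The second fact is Lemma~\ref{lem:riemann_grad}: the Riemannian gradient of $f=\bar f|_\Mf$ is the tangent vector field $y\mapsto\grad f(y)=\Proj_y\grad\bar f(y)$.

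First I would justify the submanifold-connection identity. One checks that the right-hand side above defines an affine connection on $\Mf$ which is torsion-free and compatible with the induced metric $\dotprod{\,\cdot\,}{\,\cdot\,}$; by uniqueness of the Levi-Civita connection it then coincides with $\nabla$. Both verifications are routine and use only that $\Deriv$ is the flat connection of $E$ together with the orthogonal decomposition of an ambient vector into tangential and normal parts; the argument is carried out in \cite[Section~5.3.3]{absil2008optimization}, so I would merely cite it. The point that does require a word is well-definedness: if $\eta_1,\eta_2$ are two smooth extensions of the same tangent field, then $\eta_1-\eta_2$ vanishes identically on $\Mf$, hence its directional derivative in a direction $\xi_x\in T_x\Mf$ — which may be computed along a curve lying in $\Mf$ — is zero, and a fortiori so is its projection; thus $\Proj_x\Deriv\eta(x)[\xi_x]$ does not depend on the extension.

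Granting this, the lemma follows immediately. By the definition of the Riemannian Hessian and the identity above,
\begin{equation*}
	\Hess f(x)[\xi_x] = \nabla_{\xi_x}\grad f = \Proj_x\big(\Deriv(\grad f)(x)[\xi_x]\big),
\end{equation*}
and substituting the expression $\grad f(y)=\Proj_y\grad\bar f(y)$ from Lemma~\ref{lem:riemann_grad} gives
\begin{equation*}
	\Hess f(x)[\xi_x] = \Proj_x\,\Deriv\big(\Proj_x\grad\bar f(x)\big)[\xi_x],
\end{equation*}
which is \eqref{eqn:lem_riemann_hess}.

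The only real obstacle is interpretive rather than computational: the inner expression $\Proj_x\grad\bar f(x)$ must be read as the value at $x$ of the map $y\mapsto\Proj_y\grad\bar f(y)$, so that $\Deriv$ differentiates both the base-point-dependent projector $y\mapsto\Proj_y$ and the ambient gradient. One therefore has to confirm that this map is a genuine smooth tangent vector field on $\Mf$ — which it is, since $\bar f$ is smooth and the orthogonal projection onto the tangent space of a smooth embedded submanifold depends smoothly on the base point (for $\Mfr$ this is visible from the closed form \eqref{eqn:orth_proj}) — and that, after the outer $\Proj_x$ is applied, the result is independent of the smooth extension chosen to perform the differentiation, exactly as in the well-definedness argument above. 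Once these points are noted, the chain of equalities is complete.
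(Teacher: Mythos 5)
Your proposal is correct and follows exactly the route the paper takes: it combines the fact that on a Riemannian submanifold of Euclidean space the Levi--Civita connection is the tangential projection of the ambient directional derivative (the identity stated just before the lemma, cited to \cite[Section~5.3.3]{absil2008optimization}) with the gradient formula of Lemma~\ref{lem:riemann_grad}. The extra care you take about extension-independence and about reading $\Proj_x\grad\bar f(x)$ as the value of the field $y\mapsto\Proj_y\grad\bar f(y)$ is a welcome clarification but does not change the argument.
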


Using the chain rule, we can write \eqref{eqn:lem_riemann_hess} as
\begin{align}
	\Hess f(x) [\xi] &= \Proj_x \Deriv\big( \Proj_x \grad \bar{f} (x)\big)\nonumber\\
	&= \Proj_x \Hess\bar{f}(x)[\xi_x] + \Proj_x  \Deriv_\xi \Proj_x \grad \bar{f} (x), \label{eqn:curvature_term}
\end{align}
where we view $x\mapsto\Proj_x$ as an operator-valued function and denote its directional derivative by $\Deriv_\xi$. We observe that he 
first term in \eqref{eqn:curvature_term} is just the orthogonal projection of the Euclidean Hessian, while the 
second one depends on the curvature of the manifold $\Mf$. Indeed, the second term is equal to zero when $\Mf$ is flat, i.\,e. a linear
subspace of the embedding Euclidean space, cf. \cite[Subsection~4.1]{kressner2016preconditioned}. Clearly, the main challenge in calculating the Riemannian Hessian in
\eqref{eqn:curvature_term} is the derivative of the projection operator. In \cite[Section 3]{absil2013extrinsic}, the authors
show the following result using the \textit{Weingarten map}.

\begin{lemma}
	Let $\Mf$ be a Riemannian submanifold of a Euclidean space $\mathcal{E}$. For any $x\in\Mf$, let $\Proj_x$ denote the orthogonal
	projection onto the tangent space $T_x\Mf$, and $\Proj_x^\perp \coloneqq \id_\mathcal{E} - \Proj_x$ the orthogonal projection
	on its orthogonal complement $(T_x\Mf)^\perp$. We view $x\mapsto\Proj_x$ as an operator-valued function and denote its Gâteaux
	derivative at point $x$ in the direction of $\xi\in T_x\Mf$ by $\Deriv_\xi \Proj_x$. Then
	\begin{equation}
		\Proj_x\Deriv_\xi \Proj_x u = \Proj_x \Deriv_\xi \Proj_x \big(\Proj_x^\perp u\big),
	\end{equation}
	for all $x \in \Mf$, $\xi \in T_x\Mf$ and $u \in \mathcal{E}$.
\end{lemma}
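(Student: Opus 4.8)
The plan is to reduce the claimed identity to a pointwise statement about how the derivative of the projector acts, exploiting the algebraic relation between $\Proj_x$ and $\Proj_x^\perp$. First I would write $\id_\mathcal{E} = \Proj_x + \Proj_x^\perp$, apply this to $u$, and use linearity of the Gâteaux derivative together with linearity of $\Proj_x$ to split
\begin{equation*}
	\Proj_x \Deriv_\xi \Proj_x u = \Proj_x \Deriv_\xi \Proj_x \big(\Proj_x u\big) + \Proj_x \Deriv_\xi \Proj_x \big(\Proj_x^\perp u\big).
\end{equation*}
So the whole claim is equivalent to showing that the first summand on the right vanishes, i.e. $\Proj_x \Deriv_\xi \Proj_x v = 0$ whenever $v \in T_x\Mf$, or more precisely whenever $v = \Proj_x u$ for some $u$.

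The key step is to differentiate the idempotency relation $\Proj_x \Proj_x = \Proj_x$, viewed as an identity of operator-valued functions of $x$, along the curve $t\mapsto \gamma(t)$ with $\gamma(0)=x$, $\dot\gamma(0)=\xi$. The product rule for Gâteaux derivatives gives
\begin{equation*}
	(\Deriv_\xi \Proj_x)\Proj_x + \Proj_x (\Deriv_\xi \Proj_x) = \Deriv_\xi \Proj_x.
\end{equation*}
Applying this operator identity to $u$ and then projecting with $\Proj_x$ on the left yields $\Proj_x(\Deriv_\xi \Proj_x)\Proj_x u + \Proj_x \Proj_x (\Deriv_\xi \Proj_x) u = \Proj_x (\Deriv_\xi \Proj_x) u$; since $\Proj_x \Proj_x = \Proj_x$, the middle term collapses and the equation becomes trivially satisfied, which is not yet what I want. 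Instead I would use the symmetry/self-adjointness of $\Proj_x$: because $\Proj_x$ is an orthogonal projector it is self-adjoint for all $x$, so differentiating $\Proj_x = \Proj_x^*$ shows $\Deriv_\xi \Proj_x$ is self-adjoint as well. Combining self-adjointness of $\Deriv_\xi\Proj_x$ with the product-rule identity above, I can rearrange to obtain $\Proj_x (\Deriv_\xi \Proj_x) \Proj_x = 0$: indeed, sandwiching the product-rule relation between $\Proj_x$ on both sides gives $\Proj_x(\Deriv_\xi\Proj_x)\Proj_x + \Proj_x(\Deriv_\xi\Proj_x)\Proj_x = \Proj_x(\Deriv_\xi\Proj_x)\Proj_x$, hence $\Proj_x(\Deriv_\xi\Proj_x)\Proj_x = 0$.

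With $\Proj_x(\Deriv_\xi\Proj_x)\Proj_x = 0$ established, applying this to $u$ gives exactly that the first summand in the splitting vanishes, and the identity $\Proj_x\Deriv_\xi\Proj_x u = \Proj_x \Deriv_\xi \Proj_x (\Proj_x^\perp u)$ follows. The remaining bookkeeping is to make the differentiation rigorous: one should note that $x\mapsto\Proj_x$ is smooth (it is given by the explicit formula \eqref{eqn:orth_proj} in the case of $\Mfr$, and in general smoothness of the projector onto the tangent bundle of an embedded submanifold is standard), so the product and composition rules for Gâteaux derivatives apply and all the manipulations of operator-valued maps are legitimate. The main obstacle I anticipate is purely notational: being careful that "$\Deriv_\xi \Proj_x$" denotes an operator on all of $\mathcal{E}$ (not just on $T_x\Mf$) so that compositions like $(\Deriv_\xi\Proj_x)\Proj_x^\perp$ make sense, and that the product rule is applied to the correct composition. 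Once that is set up, the proof is three lines: split the identity via $\id = \Proj_x + \Proj_x^\perp$, differentiate idempotency and use self-adjointness to get $\Proj_x(\Deriv_\xi\Proj_x)\Proj_x = 0$, and conclude.
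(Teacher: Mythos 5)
Your proof is correct, and it is worth noting that the paper itself does not prove this lemma at all: it is stated as a cited result from Absil et al.\ \cite{absil2013extrinsic}, where the identity arises inside the Weingarten-map framework for extrinsic computation of the Riemannian Hessian. Your argument is therefore a genuinely different (and more elementary, self-contained) route: split $u=\Proj_x u+\Proj_x^\perp u$ by linearity, reduce the claim to $\Proj_x(\Deriv_\xi\Proj_x)\Proj_x=0$, and obtain that by differentiating the idempotency $\Proj_x\Proj_x=\Proj_x$ along a curve through $x$ with velocity $\xi$, which gives $(\Deriv_\xi\Proj_x)\Proj_x+\Proj_x(\Deriv_\xi\Proj_x)=\Deriv_\xi\Proj_x$, and then sandwiching this relation between $\Proj_x$ on both sides to get $2\,\Proj_x(\Deriv_\xi\Proj_x)\Proj_x=\Proj_x(\Deriv_\xi\Proj_x)\Proj_x$. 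This buys independence from the cited machinery at the cost of only a few lines, and the smoothness of $x\mapsto\Proj_x$ for an embedded submanifold indeed legitimizes the product rule, as you observe. One streamlining: your appeal to self-adjointness of $\Deriv_\xi\Proj_x$ is superfluous --- the two-sided sandwich alone already yields the vanishing of $\Proj_x(\Deriv_\xi\Proj_x)\Proj_x$, so that detour (introduced after your one-sided projection attempt collapsed to a triviality) can simply be deleted. The resulting identity is exactly the statement that, after tangential projection, $\Deriv_\xi\Proj_x$ only sees the normal component of its argument, which is the form in which the cited work uses it to define the Weingarten map.
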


This result can be applied to the case of the low-rank Tucker manifold $\Mf = \Mfr$. First, we calculate the derivative
$\Deriv_\xi \Proj_\mathbf{X}$.

\begin{lemma} \label{lem:curvature_term}
	Let $\mathbf{X} \in \Mfr$ be a tensor on the low-rank manifold, given by the factorization
	$\mathbf{X} = \mathbf{C} \bigtimes_{i=1}^d U_i$, and let $\xi \in T_\mathbf{X}\Mf_{\mathbf{r}}$, given by the variations
	\begin{equation*}
		\xi = \dot{\mathbf{C}}\bigtimes_{i=1}^d U_i + \sum_{i=1}^d \mathbf{C} \times_i \dot{U}_i \bigtimes_{j\neq i} U_j.
	\end{equation*}
	We use the notations $\Proj_{U_i} = U_iU_i^\T$, $\Proj_{U_i}^\perp = I_{n_i}-U_iU_i^\T$ and
	$\dot{\Proj}_{U_i}=\dot{U}_iU_i^\T+U_i\dot{U}_i^\T$.
	Then, for any $\mathbf{E}\in\R^{n_1\times\dotsb\times n_d}$, the derivative of $\Proj_\mathbf{X}$ in the direction of $\xi$
	is given by
	\begin{align*}
		\Deriv_\xi \Proj_\mathbf{X} \mathbf{E} = \sum_{i=1}^d \Bigg\{
		&\mathbf{E} \times_i \dot{\Proj}_{U_i} \bigtimes_{j\neq i} \Proj_{U_j}\\
		+\,&\dot{\mathbf{C}}\times_i \Bigg( \Proj_{U_i}^\perp \bigg[ \mathbf{E}\bigtimes_{j\neq i} U_j^\T \bigg]_{(i)} C_{(i)} \Bigg)
		\bigtimes_{k\neq i} U_k\\
		-\,&\mathbf{C}\times_i \Bigg( \dot{\Proj}_{U_i} \bigg[ \mathbf{E}\bigtimes_{j\neq i} U_j^\T \bigg]_{(i)} C_{(i)} \Bigg)
		\bigtimes_{k\neq i} U_k\\
		+\,&\sum_{l\neq i}\mathbf{C}\times_i \Bigg( \Proj_{U_i}^\perp \bigg[ \mathbf{E} \times_l \dot{U}_l^\T
		\bigtimes_{l\neq j\neq i} U_j^\T \bigg]_{(i)} C_{(i)} \Bigg) \bigtimes_{k\neq i} U_k\\
		+\,&\mathbf{C}\times_i \Bigg( \Proj_{U_i}^\perp \bigg[ \mathbf{E}\bigtimes_{j\neq i} U_j^\T \bigg]_{(i)}
		\bigg[ \Big( I - C_{(i)}^+C_{(i) }\Big)
		\dot{C}_{(i)}^\T C_{(i)}^{+\T}C_{(i)}^+ - C_{(i)}^+\dot{C}_{(i)}C_{(i)}^+ \bigg] \Bigg)
		\bigtimes_{k\neq i} U_k\\
		+\,&\sum_{l\neq i}\mathbf{C}\times_i \Bigg( \Proj_{U_i}^\perp \bigg[ \mathbf{E}\bigtimes_{j\neq i} U_j^\T \bigg]_{(i)}
		C_{(i)} \Bigg) \times_l \dot{U}_l \bigtimes_{l\neq k\neq i} U_j
		\Bigg\},
	\end{align*}
	where $I = I_{\prod_{j\neq i}r_j}$ is the identity matrix of the appropriate size.
\end{lemma}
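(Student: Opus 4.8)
The plan is to differentiate the closed-form projection formula \eqref{eqn:orth_proj} term by term along a tangent curve. Concretely, I would pick a smooth curve $t\mapsto\mathbf{X}(t)=\mathbf{C}(t)\bigtimes_{i=1}^d U_i(t)$ on $\Mfr$ with $\mathbf{X}(0)=\mathbf{X}$ and $\dot{\mathbf{X}}(0)=\xi$, so that $\dot{\mathbf{C}}(0)=\dot{\mathbf{C}}$, $\dot{U}_i(0)=\dot{U}_i$ with $\dot{U}_i^\T U_i=O$. Write $\Proj_{\mathbf{X}(t)}\mathbf{E}$ as the sum of the ``core'' term $\big(\mathbf{E}\bigtimes_{j}U_j(t)^\T\big)\bigtimes_i U_i(t)$ and the $d$ ``orthogonal'' terms $\mathbf{E}\times_i\big(\Proj_{U_i(t)}^\perp[\mathbf{E}\bigtimes_{j\neq i}U_j(t)^\T]_{(i)}C_{(i)}(t)^+\big)\bigtimes_{k\neq i}U_k(t)$, then apply $\frac{d}{dt}\big|_{t=0}$ using the product rule for $i$-mode products \eqref{eqn:diff_modes}--\eqref{eqn:same_mode}. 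The elementary derivatives needed are $\frac{d}{dt}(U_iU_i^\T)=\dot{\Proj}_{U_i}$, hence $\frac{d}{dt}\Proj_{U_i}^\perp=-\dot{\Proj}_{U_i}$; $\frac{d}{dt}U_i^\T=\dot{U}_i^\T$; and the derivative of the pseudoinverse $\frac{d}{dt}C_{(i)}^+$ for a full-row-rank matrix, which gives the bracketed $[(I-C_{(i)}^+C_{(i)})\dot{C}_{(i)}^\T C_{(i)}^{+\T}C_{(i)}^+ - C_{(i)}^+\dot{C}_{(i)}C_{(i)}^+]$ expression.

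The bookkeeping then proceeds by grouping. Differentiating the core term $\big(\mathbf{E}\bigtimes_j U_j^\T\big)\bigtimes_i U_i$ produces, for each $i$, one contribution from differentiating the outer $U_i$ and one from differentiating the inner $U_i^\T$; combining the inner and outer contributions in mode $i$ and collapsing the result via \eqref{eqn:same_mode} yields exactly the first listed term $\mathbf{E}\times_i\dot{\Proj}_{U_i}\bigtimes_{j\neq i}\Proj_{U_j}$ (here one uses $U_jU_j^\T=\Proj_{U_j}$ in the off-$i$ modes and that $U_i\dot{U}_i^\T+\dot{U}_iU_i^\T$ is precisely $\dot{\Proj}_{U_i}$). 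Differentiating the $i$th orthogonal term splits into five pieces according to which factor is differentiated: (a) differentiating $\Proj_{U_i}^\perp$ gives the third listed term (with the minus sign); (b) differentiating $[\mathbf{E}\bigtimes_{j\neq i}U_j^\T]_{(i)}$ in one off-mode $l\neq i$ gives the fourth term (the $\sum_{l\neq i}$ with $\mathbf E\times_l\dot U_l^\T$); (c) differentiating $C_{(i)}^+$ gives the fifth term with the pseudoinverse bracket; (d) differentiating an outer $U_k$ with $k\neq i$ gives the sixth term. The second listed term, the one with $\dot{\mathbf{C}}$ and $C_{(i)}$ (not $C_{(i)}^+$), is the subtle one: it does not come from differentiating the orthogonal term directly but is produced when one re-expresses $\frac{d}{dt}C_{(i)}(t)^+$ and combines the $C_{(i)}^+C_{(i)}=I$-type simplifications with the fact that $\Proj_{U_i}^\perp[\mathbf{E}\bigtimes_{j\neq i}U_j^\T]_{(i)}$ times the $C_{(i)}^+$-derivative bracket rearranges; alternatively it arises because the core term's derivative, when projected, leaks a component into the orthogonal directions. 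I would double-check its provenance by re-deriving $\Deriv_\xi\Proj_{\mathbf X}$ abstractly from the defining property $\Proj_{\mathbf X}^2=\Proj_{\mathbf X}$, which forces $\Deriv_\xi\Proj_{\mathbf X}=\Proj_{\mathbf X}(\Deriv_\xi\Proj_{\mathbf X})\Proj_{\mathbf X}^\perp+\Proj_{\mathbf X}^\perp(\Deriv_\xi\Proj_{\mathbf X})\Proj_{\mathbf X}$, giving a consistency cross-check against the explicit formula.

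The main obstacle is purely organizational rather than conceptual: the $i$-mode product notation with the nested $\bigtimes_{j\neq i}$, $\bigtimes_{k\neq i}$ and doubly-restricted $\bigtimes_{l\neq j\neq i}$ products makes the product rule expansion generate many terms that must be matched by mode, and one must be careful that when differentiating a factor living in mode $l\neq i$ inside an object that is then matricized in mode $i$, the resulting $\dot{U}_l^\T$ stays attached correctly (this is the source of the double-sum $\sum_{l\neq i}$ terms). I would handle this by fixing $i$, writing the $i$th summand in fully matricized mode-$i$ form as $\big(\Proj_{U_i}^\perp M_i C_{(i)}^+\big)$ retensorized with the appropriate outer factors, where $M_i=[\mathbf{E}\bigtimes_{j\neq i}U_j^\T]_{(i)}$, differentiating this compact scalar-matrix expression first, and only at the end translating each resulting matrix term back into tensor/$i$-mode-product notation — this isolates the genuinely new computations (pseudoinverse derivative, projector derivative) from the notational translation and keeps the five/six-way split transparent. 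Summing over $i$ and collecting the core-term contribution then gives the claimed formula.
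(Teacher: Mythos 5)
Your overall route is the same as the paper's: view \eqref{eqn:orth_proj} as a function of the Tucker factors $(\mathbf{C},U_1,\dotsc,U_d)$, differentiate along the variation $(\dot{\mathbf{C}},\dot U_1,\dotsc,\dot U_d)$ by the product rule, and use the pseudoinverse derivative of \cite{golub1973differentiation}, whose middle term vanishes because $C_{(i)}$ has full row rank. Your derivation of the first term (combining the inner $\dot U_i^\T$ and outer $\dot U_i$ contributions of the core part into $\mathbf{E}\times_i\dot{\Proj}_{U_i}\bigtimes_{j\neq i}\Proj_{U_j}$) and of terms three through six is correct.

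The genuine gap is your account of the second term, the one carrying $\dot{\mathbf{C}}$. Both provenances you offer are wrong: the core part $(\mathbf{E}\bigtimes_{j}U_j^\T)\bigtimes_i U_i$ contains no $\mathbf{C}$ at all, so its derivative cannot ``leak'' a $\dot{\mathbf{C}}$ contribution (computing $\Deriv_\xi\Proj_\mathbf{X}\mathbf{E}$ involves a raw directional derivative, no subsequent projection), and the pseudoinverse bracket yields only the fifth term --- no rearrangement using $C_{(i)}C_{(i)}^+=I$ can turn matrix expressions in $\dot C_{(i)}$ sitting to the right of $[\mathbf{E}\bigtimes_{j\neq i}U_j^\T]_{(i)}$ into a tensor $\dot{\mathbf{C}}$ multiplied in mode $i$. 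What you are missing is that the orthogonal part of the projection must be read as $\mathbf{C}\times_i\big(\Proj_{U_i}^\perp[\mathbf{E}\bigtimes_{j\neq i}U_j^\T]_{(i)}C_{(i)}^+\big)\bigtimes_{k\neq i}U_k$: the leading tensor is the core $\mathbf{C}$, not the argument. Indeed the ``$\mathbf{A}\times_i$'' printed in \eqref{eqn:orth_proj} cannot be meant literally, since a matrix in $\R^{n_i\times r_i}$ cannot be applied in mode $i$ of a tensor with $n_i$ entries in that mode, whereas applied to $\mathbf{C}$ it reproduces exactly the tangent-space parametrization $\mathbf{C}\times_i\dot U_i\bigtimes_{k\neq i}U_k$ with the optimal $\dot U_i$. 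Once the front factor $\mathbf{C}$ is in place, the second term of the lemma is simply the product-rule contribution from differentiating that factor, $\dot{\mathbf{C}}\times_i\big(\Proj_{U_i}^\perp[\mathbf{E}\bigtimes_{j\neq i}U_j^\T]_{(i)}C_{(i)}^+\big)\bigtimes_{k\neq i}U_k$ (the $C_{(i)}$ printed in the statement is likewise dimensionally impossible and should be $C_{(i)}^+$); there is nothing subtle to reconstruct, and your remaining five-way split then matches the statement term by term. As written, however, your proof either fails to produce this term or, following your literal reading of \eqref{eqn:orth_proj}, produces a formula that omits it altogether.
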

\begin{proof}
	The formula can be obtained by identifying the tensor $\mathbf{X}$ with the factors in the Tucker decomposition and viewing the 
	orthogonal projection defined in \eqref{eqn:orth_proj} as a function
	\begin{equation*}
		\Proj_{\,\cdot\,}\mathbf{E}: \R^{r_1 \times \dotsb \times r_d}\times  \R^{n_1\times r_1} \times \dotsb \times \R^{n_d\times r_d}
		\rightarrow \R^{n_1 \times \dotsb \times n_d}, ~ (\mathbf{C},U_1,\dotsc,U_d) \mapsto \Proj_\mathbf{X} \mathbf{E},
	\end{equation*}
	for any $\mathbf{E}\in\R^{n_1 \times \dotsb \times n_d}$.
	For calculating the derivative of the pseudoinverse, we use the formula
	given in \cite[Theorem~4.3]{golub1973differentiation}, i.\,e.
	\begin{equation*}
		\Deriv_{\dot{C}}\big( C^+\big) = \big( I - C^+C\big)\dot{C}^\T C^{+\T}C^+ 
		+ C^+C^{+\T}\dot{C}^\T \big(  CC^+ - I \big)
		- C^+\dot{C}C^+,
	\end{equation*}
	and note that, here, the second term vanishes since $C = C_{(i)}$ has full row rank, and thus the pseudoinverse is a right inverse.
\end{proof}

Using this result, we can immediately evaluate the curvature term in \eqref{eqn:curvature_term}.

\begin{corollary} \label{cor:curvature_term}
	We use the setting of Lemma~\ref{lem:curvature_term} and denote the orthogonal projection onto
	$(T_\mathbf{X}\Mf_{\mathbf{r}})^\perp$ by $\Proj_\mathbf{X}^\perp\coloneqq\mathrm{id}-\Proj_\mathbf{X}$. Then
	\begin{equation*}
		\Proj_\mathbf{X} \Deriv_\xi\Proj_\mathbf{X} \Proj_\mathbf{X}^\perp \mathbf{E} =
		\widetilde{\mathbf{C}}\bigtimes_{i=1}^d U_i + \sum_{i=1}^d \mathbf{C} \times_i \widetilde{U}_i \bigtimes_{j\neq i} U_j
		\in T_\mathbf{X}\Mfr,
	\end{equation*}
	with
	\begin{align*}
		\widetilde{\mathbf{C}} &= \sum_{j=1}^d \bigg( \mathbf{E} \times_j \dot{U}_j^\T \bigtimes_{k\neq j}U_k^\T
		- \mathbf{C} \times_j \Big( \dot{U}_j^\T \big[\mathbf{E}\bigtimes_{k\neq j} U_j^\T\big]_{(j)} C_{(j)}^+ \Big) \bigg),\\
		\widetilde{U}_i &= \Proj_{U_i}^\perp \bigg( 
		\big[\mathbf{E}\bigtimes_{j\neq i} U_j^\T\big]_{(i)} \big(I - C_{(i)}^+C_{(i)}\big) \dot{C}_{(i)}^\T C_{(i)}^{+\T} 
		+ \sum_{k\neq i} \big[\mathbf{E} \times_k \dot{U}_k^\T \bigtimes_{k\neq j \neq i} U_j^\T \big]_{(i)} \bigg) C_{(i)}^+,
	\end{align*}
\end{corollary}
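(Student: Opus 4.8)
The plan is to substitute the explicit expression for $\Deriv_\xi\Proj_\mathbf{X}\mathbf{E}$ from Lemma~\ref{lem:curvature_term} and apply the orthogonal projection $\Proj_\mathbf{X}$ of \eqref{eqn:orth_proj} directly. By the Weingarten-map lemma stated above (cf.~\cite{absil2013extrinsic}) one has $\Proj_\mathbf{X}\Deriv_\xi\Proj_\mathbf{X}\Proj_\mathbf{X}^\perp\mathbf{E} = \Proj_\mathbf{X}\bigl(\Deriv_\xi\Proj_\mathbf{X}\mathbf{E}\bigr)$, so it is enough to project the six families of terms appearing in Lemma~\ref{lem:curvature_term} and to sort what survives into the two canonical types of elements of $T_\mathbf{X}\Mfr$, a ``core variation'' $\widetilde{\mathbf{C}}\bigtimes_{i=1}^d U_i$ and ``factor variations'' $\sum_i\mathbf{C}\times_i\widetilde{U}_i\bigtimes_{j\neq i}U_j$.

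The computation runs on a short list of identities: $U_i^\T U_i = I$ and the gauge relation $U_i^\T\dot U_i = 0$, whence $U_i^\T\dot{\Proj}_{U_i} = \dot U_i^\T$ and $\Proj_{U_i}^\perp\dot{\Proj}_{U_i} = \dot U_i U_i^\T$; the projector relations $\Proj_{U_i}^\perp U_i = 0$ and $U_j^\T\Proj_{U_j}^\perp = 0$; and the full-row-rank relations $C_{(i)}C_{(i)}^+ = I$, $C_{(i)}^+C_{(i)}C_{(i)}^+ = C_{(i)}^+$. Projecting mode by mode, I expect the following pattern. The ``core part'' of $\Proj_\mathbf{X}$ annihilates every family whose mode-$i$ slot begins with $\Proj_{U_i}^\perp$, and turns the first and third families into precisely the two summands of the claimed $\widetilde{\mathbf{C}}$ via $U_i^\T\dot{\Proj}_{U_i} = \dot U_i^\T$. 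For the ``factor part'' of $\Proj_\mathbf{X}$ in mode $m$: whenever $m\neq i$ the contraction, or the trailing $\Proj_{U_m}^\perp$, meets a $U_m$, $\dot U_m$ or $\Proj_{U_m}$ slot and is killed by $\Proj_{U_m}^\perp U_m = 0$ or $U_m^\T\dot U_m = 0$, so only $m = i$ contributes. The $m=i$ contributions of the first and third families both equal $\pm\,\dot U_i(\mathbf{E}\bigtimes_j U_j^\T)_{(i)}C_{(i)}^+$ (using $\Proj_{U_i}^\perp\dot{\Proj}_{U_i} = \dot U_i U_i^\T$ and $C_{(i)}^+C_{(i)}C_{(i)}^+ = C_{(i)}^+$) and cancel; the sixth family vanishes outright, since its mode-$i$ slot starts with $\Proj_{U_i}^\perp$ while its only other non-$U$ slot is a $\dot U_l$; the fourth and fifth families are already tangent vectors and reproduce themselves; and the second family produces $+\Proj_{U_i}^\perp[\mathbf{E}\bigtimes_{j\neq i}U_j^\T]_{(i)}C_{(i)}^+\dot{C}_{(i)}C_{(i)}^+$, which cancels the $-C_{(i)}^+\dot{C}_{(i)}C_{(i)}^+$ piece hidden inside the fifth family. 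What remains — the $\widetilde{\mathbf{C}}$-terms from families one and three, the $(I-C_{(i)}^+C_{(i)})\dot{C}_{(i)}^\T C_{(i)}^{+\T}$ piece from family five, and the $\sum_{k\neq i}$ sum from family four — assembles into the stated $\widetilde{U}_i$; the common leading $\Proj_{U_i}^\perp$ simultaneously gives the gauge condition $\widetilde{U}_i^\T U_i = 0$, so the result indeed lies in $T_\mathbf{X}\Mfr$.

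The main obstacle is organizational: propagating six multi-indexed families (two of them carrying an inner sum over a second mode) through the two-part formula \eqref{eqn:orth_proj}, keeping the modes straight, and — the genuinely non-obvious point — spotting the two cancellations, family one against family three and family two against the second bracketed term of family five, without which the projected expression would not reduce to the stated one. A secondary care point is placing the pseudoinverse identities $C_{(i)}C_{(i)}^+ = I$ and $C_{(i)}^+C_{(i)}C_{(i)}^+ = C_{(i)}^+$ exactly where they are needed, so that the surviving $\dot{C}_{(i)}$-dependence collapses to the single term $[\mathbf{E}\bigtimes_{j\neq i}U_j^\T]_{(i)}(I-C_{(i)}^+C_{(i)})\dot{C}_{(i)}^\T C_{(i)}^{+\T}C_{(i)}^+$. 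None of the individual manipulations is deep, but without the ``which part of $\Proj_\mathbf{X}$, acting on which family, in which mode'' organizing principle the bookkeeping quickly becomes unwieldy.
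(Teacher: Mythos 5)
Your proposal is correct and takes essentially the same route as the paper, whose proof consists precisely of applying Lemma~\ref{lem:curvature_term}, projecting with \eqref{eqn:orth_proj}, and simplifying via $U_i^\T U_i=I$, $\dot U_i^\T U_i=O$ and the mode-product rules; your mode-by-mode bookkeeping (families one and three surviving in the core part, the $m=i$ cancellations of one against three and of two against the $-C_{(i)}^+\dot C_{(i)}C_{(i)}^+$ piece of five, family four and the remaining piece of five surviving, family six vanishing) indeed reproduces the stated $\widetilde{\mathbf{C}}$ and $\widetilde U_i$. The only cosmetic difference is that the paper substitutes $\Proj_\mathbf{X}^\perp\mathbf{E}$ directly into Lemma~\ref{lem:curvature_term}, whereas you first use the Weingarten identity to replace $\Proj_\mathbf{X}^\perp\mathbf{E}$ by $\mathbf{E}$ — both are legitimate and lead to the same calculation.
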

\begin{proof}
	The result follows by applying Lemma~\ref{lem:curvature_term} to $\Proj_\mathbf{X}^\perp \mathbf{E}\in\R^{n_1\times\dotsb\times n_d}$ 
	after some lengthy but 
	straightforward calculations, using the orthonormality relations $\dot{U}_i^\T U_i = O$, $U_i^\T U_i = I$ and the rules
	\eqref{eqn:diff_modes} and \eqref{eqn:same_mode} for the matrix-tensor product.
\end{proof}

Thus, the Riemannian Hessian of the function $f:\Mfr \rightarrow \R$,
\begin{equation*}
	f(\mathbf{X}) = \frac{1}{2} \normb{\Proj_{\varOmega}\mathbf{X}-\Proj_{\varOmega}\mathbf{A}}^2,
\end{equation*}
can be written as
\begin{equation} \label{eqn:hessian}
	\Hess f(\mathbf{X}) [\xi] = \Proj_{\varOmega} (\xi) +
	\Proj_\mathbf{X} \Deriv_\xi\Proj_\mathbf{X} \Proj_\mathbf{X}^\perp(\Proj_\varOmega \mathbf{X} - \Proj_\varOmega \mathbf{A})
\end{equation}
and the second term can be evaluated with Corollary~\ref{cor:curvature_term}.

Note that for an efficient computation of the terms $\widetilde{U}_i$, it is advantageous to multiply out the term containing $I - C_{(i)}^+C_{(i)}$. Then, the computation of $\Hess f(\mathbf{X}) [\xi]$ for any given
$\xi\in T_\mathbf{X}\Mfr$ has the same complexity as the computation of the gradient, i.\,e. $\mathcal{O}(r^d(|\varOmega|+n)+r^{d+1})$.

\begin{remark}
	For the matrix case $d=2$, the Hessian expression \eqref{eqn:hessian} can be simplified to recover the expression shown in 
	\cite{vandereycken2013low,absil2013extrinsic},
	\begin{equation*}
		\begin{split}
			\Hess f(X) [\xi] = & \Proj_U \Proj_\varOmega(\xi) \Proj_V + 
			\Proj_U^\perp \big[ \Proj_\varOmega(\xi) + \Proj_\varOmega(X-A)\dot{V}\varSigma^{-1}V^\T \big] \Proj_V\\
			& + \Proj_U\big[ \Proj_\varOmega(\xi) + U \varSigma^{-1} \dot{U}^\T \Proj_\varOmega(X-A)\big] \Proj_V^\perp,
		\end{split}
	\end{equation*}
	where we identify the Tucker decomposition with the usual notation for the SVD, i.\,e. $U=U_1$, $V=U_2$ and $\varSigma=C$.
\end{remark}

\section{Riemannian models and trust-region methods} \label{sec:models}

In principle, the results of the previous subsections can be used to conceive a Riemannian Newton method for the solution of problem \eqref{eqn:problem}.
Such a method has been proposed in \cite[pp.\,279--283]{udriste1994convex}, where a convergence proof is given for strongly convex
functions \cite[Definition~1.1 in Chapter~7]{udriste1994convex}, using retraction by the exponential mapping (i.\,e. moving locally on a geodesic).
\cite[Theorem~4.4]{smith1994optimization} proves quadratic convergence of the method to a critical point. \cite[Theorem~6.3.2]{absil2008optimization}
provides a generalization for general retractions.

However, a plain Newton method has some well-known drawbacks:
\begin{enumerate}
	\item The convergence radius may be small, i.\,e. if the initial guess is too far from a critical point the method may diverge. \label{itm:drawback1}
	\item Each step requires the solution of a linear system. This may be expensive and conceptually difficult if the Hessian operator
	is not even given explicitly but in terms of the action on a vector in the tangent space, as in \eqref{eqn:hessian}. \label{itm:drawback2}
\end{enumerate}

There exists a number of strategies for remedying these problems. An intuitive method for globalizing the convergence of a Newton method is to modify
the Hessian such that the solution $\xi$ of
\begin{equation} \label{eqn:newton}
	\Hess f(x_k) [\xi] = - \grad f(x_k)
\end{equation}
defines a descent direction, see \cite[Section~3.4]{nocedal2006numerical} for an overview in the Euclidean case. In
\cite[Section~6.2]{absil2008optimization} a generalization to the Riemannian case is proposed, replacing the Newton equation with
\begin{equation*}
	\big( \Hess f(x_k) + E_k \big) [\xi] = - \grad f(x_k),
\end{equation*}
where $E_k$ is a sequence of positive-definite linear operators on the tangent spaces $T_{x_k}\Mf$.

However, such perturbed Newton methods rely on heuristics, and their general convergence properties are not well understood. Moreover, they still require
the solution of a linear system in each iteration. A way to circumvent this are trust-region methods \cite{conn2000trust}, which find a critical point of the 
function $f$ by minimizing a sequence of constraint quadratic models $m_{x_k}$. Our exposition follows the generalization to Riemannian optimization as given 
by Absil et al. \cite{absil2007trust}.

\subsection{Models on a Riemannian manifold $\Mf$} \label{subsec:models_mf}

For a real-valued function $f$ on a Riemannian manifold $\Mf$, a function $m_x$ is called an \textit{order-$q$ model}, $q>0$, of $\Mf$ in $x\in\Mf$ if there 
exists a neighbourhood $\mathcal{U}$ of $x$ in $\Mf$ and a constant $c>0$ such that
\begin{equation*}
	\big| f(y) - m_x(y) \big| \leq c \big( \dist(x,y) \big)^{q+1}, \text{\;\;\;for all\;\,} y \in \mathcal{U},
\end{equation*}
where $\dist$ denotes the Riemannian (geodesic) distance on $\Mf$. It can be shown \cite[Proposition~7.1.3]{absil2008optimization} that a model
$m_x$ is order-$q$ if and only if there exists a neighbourhood $\mathcal{U}'$ of $x$ in $\Mf$ and a constant $c'>0$ such that
\begin{equation*}
	\big| f(y) - m_x(y) \big| \leq c \big\| R_x^{-1}(y)\big\|^{q+1}, \text{\;\;\;for all\;\,} y \in \mathcal{U}.
\end{equation*}
i.\,e. the order of a model can be assessed using any retraction and we can avoid working with the exact geodesic.

Given a retraction $R$, this result allows to build a model for $f$ by simply taking a truncated Taylor expansion of
\begin{equation*}
	\widehat{f}_x \coloneqq f \circ R_x,
\end{equation*}
for any $x \in \Mf$. The definition of $\widehat{f}_x:T_x\Mf\rightarrow\R$ as a real-valued function on a Euclidean space
allows us to use standard results from multivariate analysis. A simple first-order model is then given by
\begin{equation*}
	\widehat{m}_x = \widehat{f}_x(0_x) + \Deriv \widehat{f}_x(0_x)[\xi] = f(x) + \dotprod{\grad f(x)}{\xi},
\end{equation*}
where the second equality follows form the rigidity condition of the retraction. A generic second-order model is given by
\begin{align*}
	\widehat{m}_x &= \widehat{f}_x(0_x) + \Deriv \widehat{f}_x(0_x)[\xi] + \tfrac{1}{2} \Deriv^2 \widehat{f}_x(0_x)[\xi,\xi]\\
	&= f(x) + \dotprod{\grad f(x)}{\xi} + \tfrac{1}{2} \dotprodb{\Hess \widehat{f}(x)[\xi]}{\xi}.
\end{align*}
A straightforward and useful modification is obtained by replacing the Euclidean Hessian on the tangent space $\Hess \widehat{f}(x)$ by the Riemannian expression
$\Hess f(x)$. The following lemma shows that this can be done in a critical point of $f$ without any loss of information.

\begin{lemma}  \label{lem:model_critp}
	\cite[Proposition~5.5.6]{absil2008optimization} Let $R$ be a retraction and let $x^*$ be a critical point of a real-valued function $f$ on $\Mf$, i.\,e.
	$\grad f(x^*)=0_{x^*}$. Then
	\begin{equation*}
		\Hess f(x^*) = \Hess \widehat{f}(0_{x^*}).
	\end{equation*}
\end{lemma}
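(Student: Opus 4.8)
The plan is to compare the Riemannian Hessian of $f$ at $x^*$ with the Euclidean Hessian of the pulled-back function $\widehat{f}_{x^*} = f \circ R_{x^*}$ at the origin $0_{x^*}$ of the tangent space $T_{x^*}\Mf$, and to exploit the fact that $\grad f(x^*) = 0$ kills exactly the term that distinguishes the two. First I would recall from the definition of the Riemannian Hessian that $\Hess f(x^*)[\xi] = \nabla_\xi \grad f (x^*)$, where $\nabla$ is the Riemannian connection, and I would recall the standard characterization (see \cite[Proposition~5.5.5]{absil2008optimization}) that for any retraction $R$ and any $x$,
\begin{equation*}
	\Hess \widehat{f}_x(0_x)[\xi] = \nabla_\xi \grad f(x) + W_x[\xi],
\end{equation*}
where $W_x$ is a correction term that is linear in $\xi$ and whose value at a point depends only on $\grad f(x)$ and on the second-order behaviour of the retraction at $0_x$ — concretely, $W_x[\xi]$ involves the \emph{acceleration} of the curve $t\mapsto R_x(t\xi)$ at $t=0$, paired against $\grad f(x)$.

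Second, I would make the correction term explicit enough to see that it vanishes at a critical point. Writing $c_\xi(t) = R_x(t\xi)$, the rigidity condition gives $c_\xi'(0) = \xi$, and a direct computation of $\tfrac{d^2}{dt^2}\big|_{t=0} \widehat{f}_x(t\xi) = \tfrac{d^2}{dt^2}\big|_{t=0} f(c_\xi(t))$ using the connection produces the Hessian quadratic form $\dotprod{\nabla_\xi \grad f(x)}{\xi}$ plus a term $\dotprod{\grad f(x)}{\tfrac{D}{dt}c_\xi'(0)}$, i.e. the gradient paired against the intrinsic acceleration of the retraction curve. At $x = x^*$ with $\grad f(x^*) = 0_{x^*}$, this extra term is identically zero for every $\xi$, so the quadratic forms associated with $\Hess f(x^*)$ and $\Hess\widehat{f}(0_{x^*})$ agree; since both are symmetric linear operators on $T_{x^*}\Mf$, polarization then forces $\Hess f(x^*) = \Hess\widehat{f}(0_{x^*})$ as operators.

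Alternatively — and perhaps more cleanly for an exposition at this level — I would invoke the symmetry of $\Hess\widehat{f}_x(0_x)$ and the identity $\Deriv\widehat{f}_x(0_x) = \grad f(x)$ directly: expand $\Deriv^2\widehat{f}_x(0_x)[\xi,\eta]$ by differentiating $t\mapsto \Deriv\widehat{f}_x(t\eta)[\text{(transported)}\ \xi]$ and isolate the piece that is proportional to $\Deriv\widehat{f}_x(0_x) = \grad f(x)$; this is the only place the two Hessians can differ, and it drops out when $\grad f(x^*)=0$. Either route reduces the statement to the single observation that the discrepancy between "Euclidean Hessian of the pullback" and "Riemannian Hessian" is a linear-in-$\xi$ multiple of $\grad f$.

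The main obstacle is bookkeeping rather than conceptual: one must be careful about \emph{which} second derivative is meant (the connection-free $\Deriv^2$ of $\widehat{f}_x$ on the vector space $T_x\Mf$ versus the intrinsic $\Hess f$ built from $\nabla$), and about the canonical identification $T_{0_x}T_x\Mf \simeq T_x\Mf$ used implicitly when writing $c_\xi'(0) = \xi$. Getting the curvature/acceleration correction term in closed form for a general retraction is the technically delicate step; but since we only ever evaluate it where $\grad f = 0$, we never actually need that closed form — it suffices to know the correction is \emph{some} expression linear in $\xi$ that is contracted with $\grad f(x)$. For the paper it is entirely legitimate to cite \cite[Proposition~5.5.6]{absil2008optimization} verbatim, as is done, and treat the computation above as the sketch justifying it.
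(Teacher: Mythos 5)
Your argument is correct and is essentially the standard proof of the cited result: the paper itself offers no proof (it quotes \cite[Proposition~5.5.6]{absil2008optimization} verbatim), and your computation of $\tfrac{d^2}{dt^2}\big|_{t=0} f(R_{x^*}(t\xi))$ as $\dotprod{\Hess f(x^*)[\xi]}{\xi}$ plus the gradient paired with the retraction-curve acceleration, which vanishes at a critical point, followed by symmetry and polarization, is exactly the textbook argument behind that proposition. The only cosmetic quibble is your attribution of the gradient--acceleration decomposition to Proposition~5.5.5 (which concerns second-order retractions) and the phrase ``linear in $\xi$'' for the correction, whose quadratic form is in fact quadratic in $\xi$; neither affects the validity of the proof.
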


Thus, we can define a model
\begin{equation*}
	m_x = f(x) + \dotprod{\grad f(x)}{\xi} + \tfrac{1}{2} \dotprodb{\Hess f(x)[\xi]}{\xi},
\end{equation*}
which does not make any use of a retraction. However, Lemma~\ref{lem:model_critp} only guarantees that $m_x$ matches
$f$ up to second order if $x$ is a critical point. In general, we can only prove that it will only give us a first-order model. The model
$m_x$ can be shown to be of second order for general $x$ if the retraction $R$ is of second order, i.\,e. if it
preserves second-order information of the exponential map, cf. \cite[Proposition~5.5.5]{absil2008optimization}.
However, numerical results presented later in this section suggest that in our case the result also holds for general points on the manifold.

\subsection{Models of different orders on $\Mfr$} \label{subsec:models_mfr}

\begin{figure}
	\begin{tabular}{ccc}
		\includegraphics[scale=0.4]{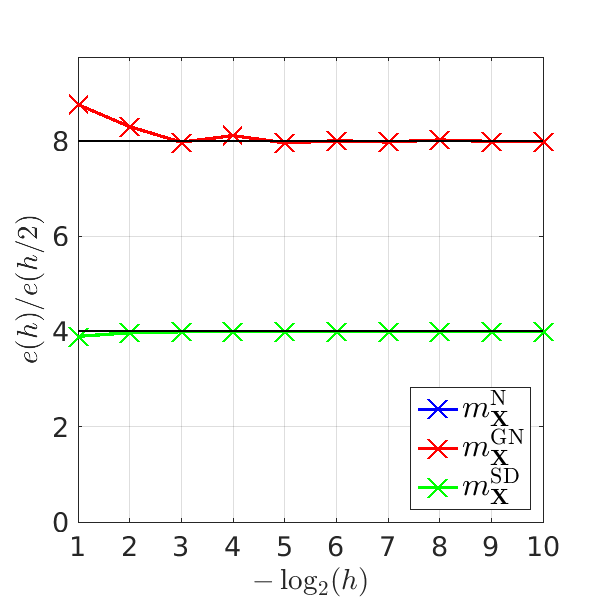}&
		\includegraphics[scale=0.4]{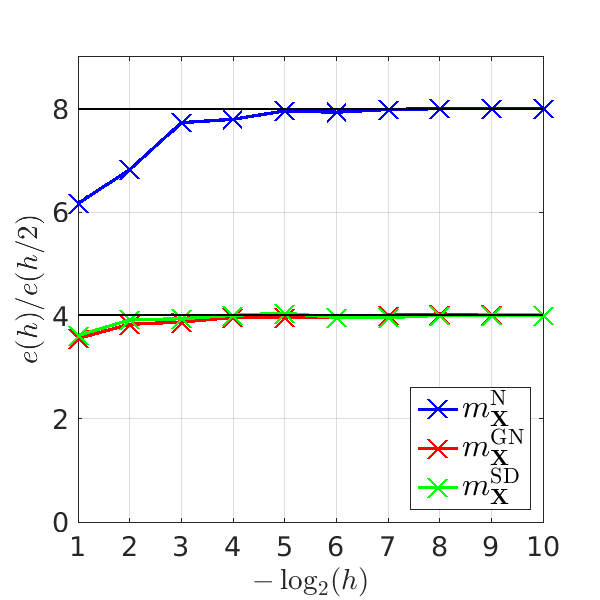}\\
		$f(\mathbf{X})=0$&
		$f(\mathbf{X})=1.2\times10$\\
		\includegraphics[scale=0.4]{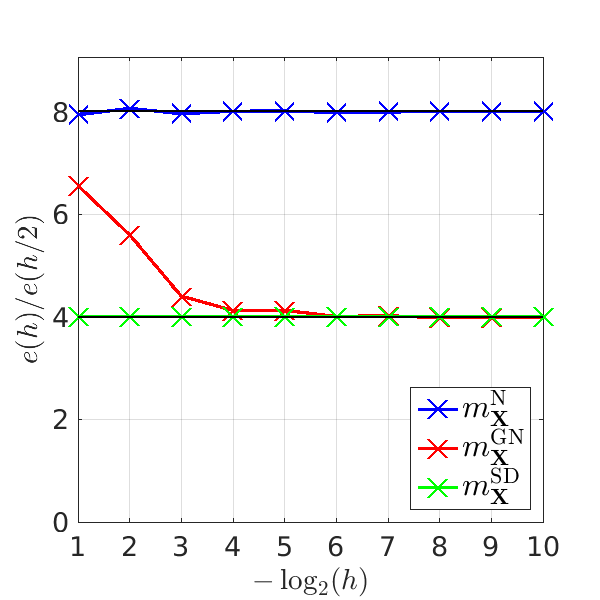}&
		\includegraphics[scale=0.4]{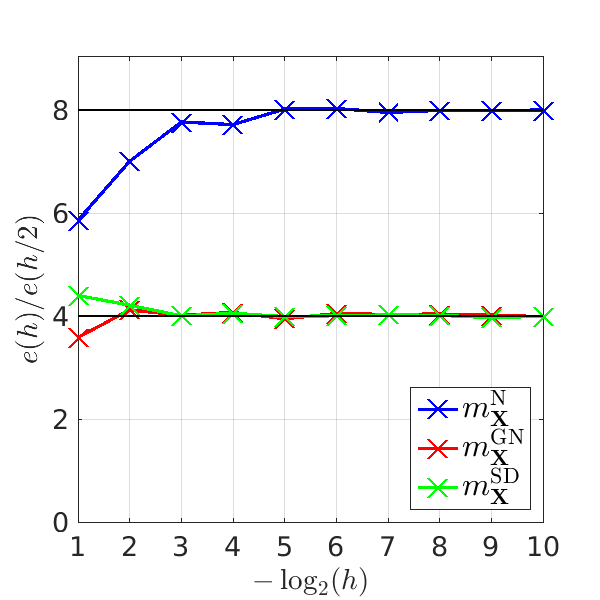}\\
		$f(\mathbf{X})=2.1\times10^{-2}$&
		$f(\mathbf{X})=1.9\times10^{2}$\\
		\includegraphics[scale=0.4]{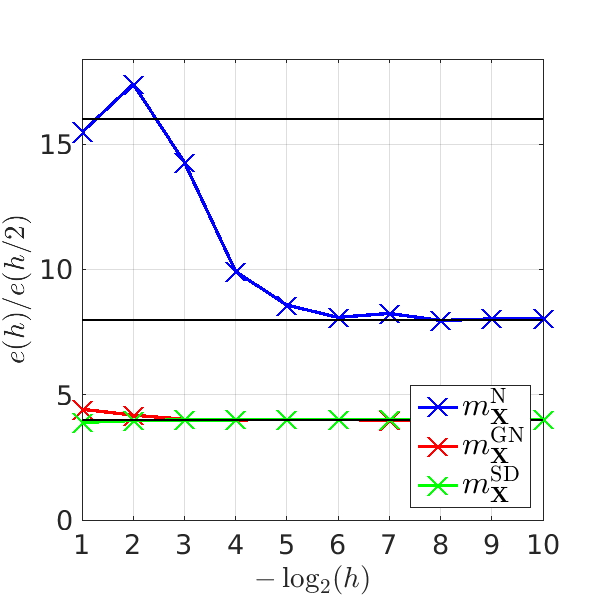}&
		\includegraphics[scale=0.4]{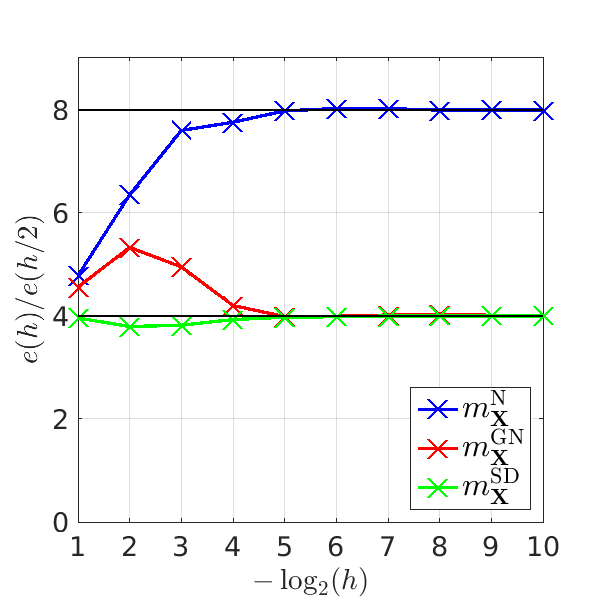}\\
		$f(\mathbf{X})=3.6\times10^{2}$&
		$f(\mathbf{X})=1.8\times10^{3}$\\
	\end{tabular}
	\caption{The unknown tensor $\mathbf{A}$ has full rank, i.\,e. $\mathbf{A}\notin\Mfr$.}
	\label{fig:model_full}
\end{figure}

\begin{figure}
	\begin{tabular}{ccc}
		\includegraphics[scale=0.4]{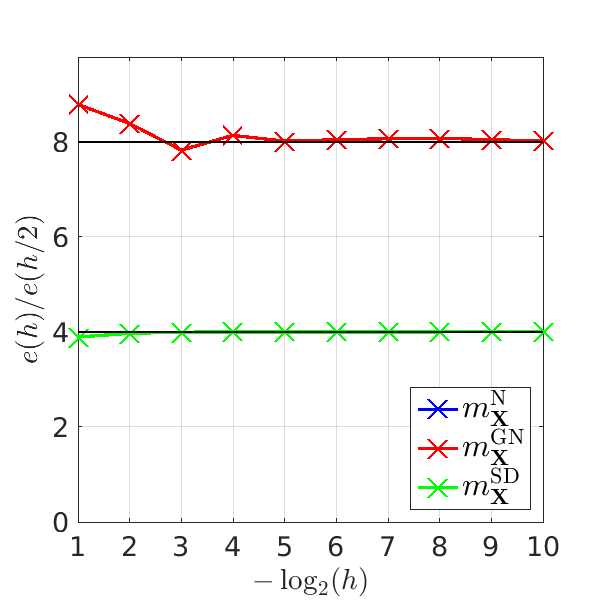}&
		\includegraphics[scale=0.4]{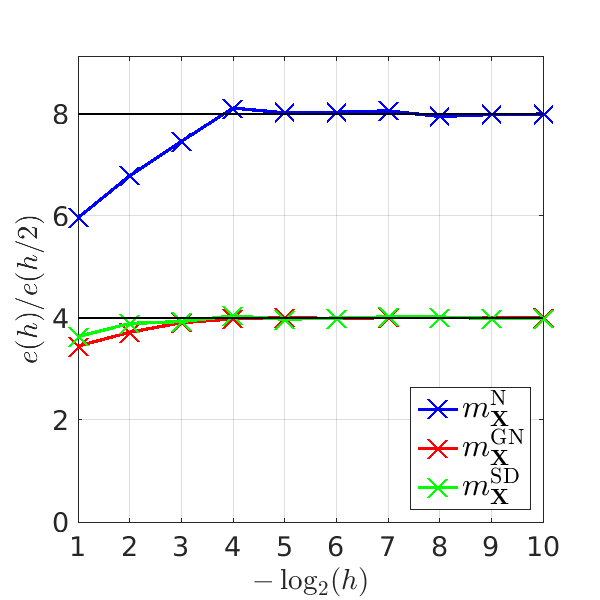}\\
		$f(\mathbf{X})=0$&
		$f(\mathbf{X})=1.3\times10$\\
		\includegraphics[scale=0.4]{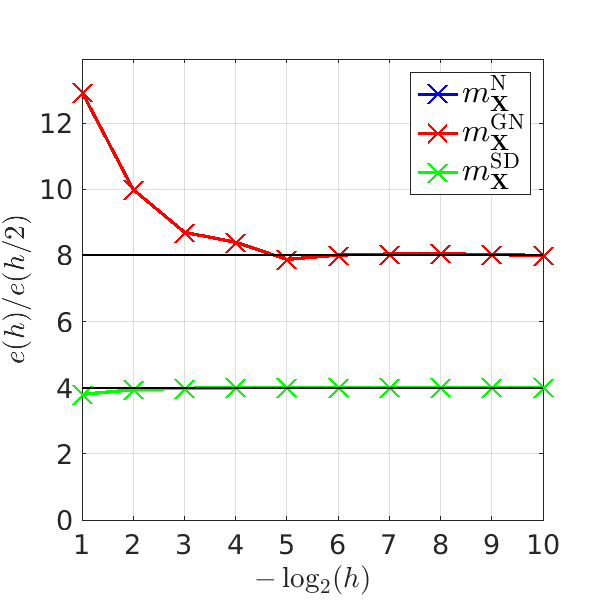}&
		\includegraphics[scale=0.4]{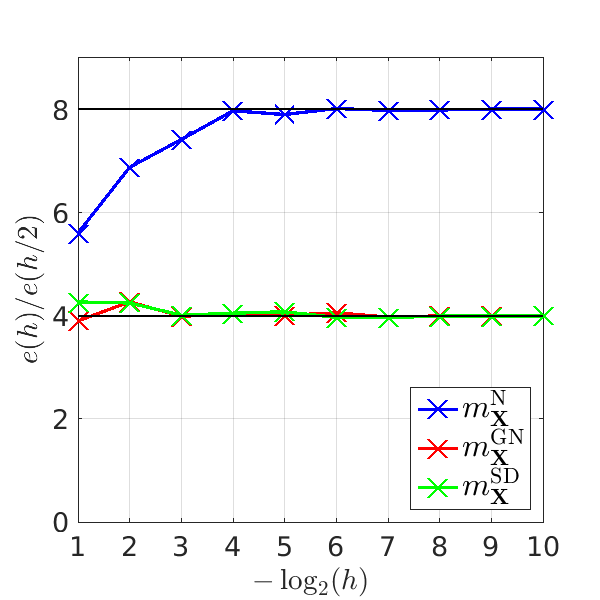}\\
		$f(\mathbf{X})=0$&
		$f(\mathbf{X})=1.5\times10^{2}$\\
		\includegraphics[scale=0.4]{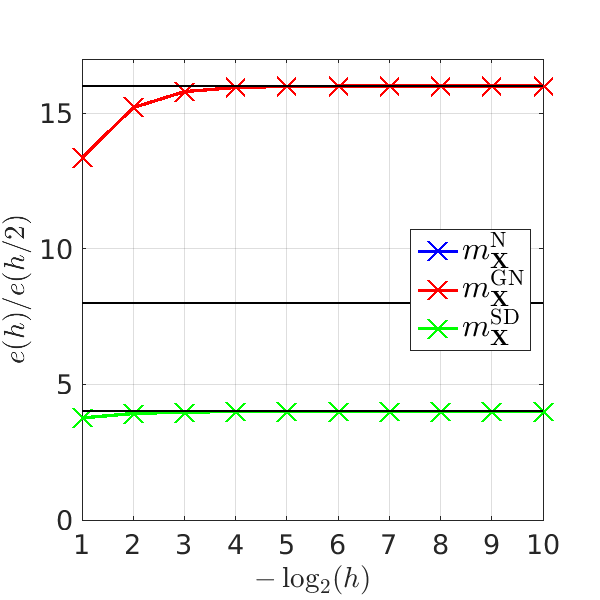}&
		\includegraphics[scale=0.4]{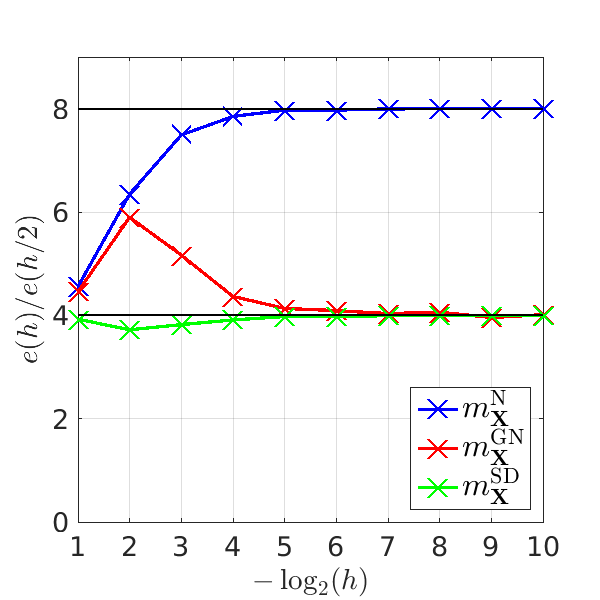}\\
		$f(\mathbf{X})=0$&
		$f(\mathbf{X})=1.4\times10^{3}$\\
	\end{tabular}
	\caption{The unknown tensor $\mathbf{A}$ has low rank, i.\,e. $\mathbf{A}\in\Mfr$.}
	\label{fig:model_low}
\end{figure}

We consider the manifold $\Mfr$ of fixed-rank tensors and would like to assess the quality of different model functions.
In accordance with the previous subsection, we consider a first-order model
\begin{equation} \label{eqn:model_sd}
	m_\mathbf{X}^\mathrm{SD}(\xi) \coloneqq f(\mathbf{X}) + \dotprod{\grad f(\mathbf{X})}{\xi},
\end{equation}
where the superscript indicates that this model corresponds to a steepest-descent method, and a second-order model
\begin{equation*}
	m_\mathbf{X}^\mathrm{N}(\xi) \coloneqq f(\mathbf{X}) + \dotprod{\grad f(\mathbf{X})}{\xi}
	+ \tfrac{1}{2} \dotprodb{\Hess f(\mathbf{X})[\xi]}{\xi},
\end{equation*}
where the superscript indicates that this model corresponds to a Newton method. Furthermore, we would like to assess
the quality of a Hessian approximation which drops the curvature term in Corollary~\ref{cor:curvature_term} and thus
ignores the second-order geometry of $\Mfr$. This is given by omitting the second term in \eqref{eqn:hessian}, and
just considering the projection of the Euclidean Hessian i.\,e.
\begin{equation} \label{eqn:model_n}
	\widetilde{\Hess} f(\mathbf{X}) [\xi] = \Proj_\varOmega \xi.
\end{equation}
Omitting the curvature term of the Hessian corresponds to a Riemannian Gauß--Newton method, as described in \cite[Subsection~8.4.1]{absil2008optimization}. Thus, we consider the model function
\begin{equation} \label{eqn:model_gn}
	m_\mathbf{X}^\mathrm{GN}(\xi) \coloneqq f(\mathbf{X}) + \dotprod{\grad f(\mathbf{X})}{\xi}
	+ \tfrac{1}{2} \dotprodb{\widetilde{\Hess} f(\mathbf{X})[\xi]}{\xi}.
\end{equation}

As usual, we can expect a Gauß--Newton method to converge superlinearly (as the corresponding model to be of order higher
than $1$) if the residual of the least-squares problem is low. This can be seen in terms of \eqref{eqn:hessian}, where the
curvature term is given as
\begin{equation*}
	\big( \Hess f(\mathbf{X}) - \widetilde{\Hess} f(\mathbf{X}) \big) [\xi] = 
	\Proj_\mathbf{X} \Deriv_\xi\Proj_\mathbf{X} \Proj_\mathbf{X}^\perp(\Proj_\varOmega \mathbf{X} - \Proj_\varOmega \mathbf{A}),
\end{equation*}
which is clearly equal to zero if $\Proj_\varOmega \mathbf{X} = \Proj_\varOmega \mathbf{A}$ and hence $f(\mathbf{X})=0$.

To assess the order of a model, we define for a given $m_\mathbf{X}$ the \textit{model error}
\begin{equation*}
	e(\xi,h) \coloneqq \big| \widehat{f}_\mathbf{X}(h\xi) - m_\mathbf{X}(h\xi) \big|,
\end{equation*}
for $\xi \in T_\mathbf{X}\Mfr$ and $h\geq 0$. Then  $m_\mathbf{X}$ is an order-$q$ model  in
$\mathbf{X}$ if and 
only if
\begin{equation*}
	e(\xi,h) = \mathcal{O}\big(h^{q+1}\big), \text{\;\;\;for all\;\,} \xi \in T_\mathbf{X}\Mfr.
\end{equation*}

In Figures~\ref{fig:model_full}~and~\ref{fig:model_low}, we test the model orders of
\eqref{eqn:model_sd}--\eqref{eqn:model_gn}. We generate random tensors
$\mathbf{B}_1,\dotsc,\mathbf{B}_{1000}\in \R^{10\times 10\times 10}$ with normally distributed entries and
project them onto a given tangent space of $\Mf_{(3,3,3)}$ to get $\xi_i = \Proj_\mathbf{X}(\mathbf{B}_i)$. We normalize the resulting
vectors to get $\norm{\xi_i} = 1$. We compute the errors $e(\xi_i,2^{-j})$ for $j = 0,\cdots,10$, and plot the geometric
mean of the factors $(\xi_i,2^{-(j+1})/(\xi_i,2^{-j})$ over all $i$. The first columns contain the results for
a stationary point of $f$, i.\,e. $\norm{\grad f({\mathbf{X^*}})} = 0$, the second columns contain the results for an
arbitrary point on the manifold with $\norm{\grad f(\mathbf{X})} \neq 0$. The first, second and third rows
contain results for different sampling sizes, with $| \varOmega| = 10,100,1000$, respectively. Note that
$|\varOmega| = 1000$ represents full sampling, i.\,e. vector approximation. We write $f(\mathbf{X})=0$ whenever the
function value computed is smaller that the machine precision of $10^{-16}$.

We observe that the model function $m_\mathbf{X}^\mathrm{SD}$, indeed, provides results of first order in all cases.
The model function $m_\mathbf{X}^\mathrm{N}$ provides results of second order not only in critical points, as has been
proved by theory, but also in general points on the manifold. This can be seen as an indication that the retraction by
HOSVD preserves second-order information although we cannot prove this. We also observe that the Gauß--Newton type model
function $m_\mathbf{X}^\mathrm{GN}$ gives second-order results whenever the curvature term is small enough, otherwise
it is only a first-order model; this matches the theoretical predictions we made earlier. It is especially worth noting that, for
$\mathbf{A}\in\Mfr$, a Gauß--Newton model is sufficient; however, this result is not robust if we add some noise. Note that in the cases where 
the blue curve cannot be seen in the plot, the models $m_\mathbf{X}^\mathrm{GN}$ and $m_\mathbf{X}^\mathrm{N}$ match almost
exactly.

We also remark that in the case of exact tensor reconstruction, i.\,e. $\mathbf{A}\in\Mfr$ and ${| \varOmega| = \prod_i n_i}$ (the lower-left
plot in Figure~\ref{fig:model_low}), both $m_\mathbf{X}^\mathrm{N}$ and $m_\mathbf{X}^\mathrm{GN}$ seem to be models of order $3$, which means that the third-order term in the Taylor expansion of $f$ vanishes. This may be attributed to a possible symmetry of $f$ around the local minimizer $\mathbf{X}^*=\mathbf{A}$
in this case, i.\,e. $f(\Exp_{\mathbf{X}^*}(\xi))=f(\Exp_{\mathbf{X}^*}(-\xi))$, where $\Exp$ denotes the exponential map. This means that the
odd-exponent terms in the Taylor expansion are equal to zero.
However, we cannot verify this theoretically as we do not have a closed-form expression for the exponential map on $\Mfr$.

\subsection{Riemannian trust-region method} \label{subsec:rtr}

The main idea of trust-region methods is solving a model problem
\begin{equation} \label{eqn:model_problem}
	\begin{split}
		&\min_{\eta\in T_{\mathbf{X}_k}\Mfr} m_{\mathbf{X}_k}(\eta)\\
		\text{s.\,t.} ~ &\norm{\eta} \leq \varDelta_k,
	\end{split}
\end{equation}
for some $\varDelta_k\geq 0$ in each iteration $k$ to obtain a search direction $\eta_k$. To get meaningful results
it is crucial to check how well the model $m_{\mathbf{X}_k}$ approximates $\widehat{f}$ in $T_{\mathbf{X}_k\Mfr}$ in the 
neighbourhood of $0_{\mathbf{X}_k}$. This can be expressed in the form of the quotient
\begin{equation} \label{eqn:rho}
	\rho_k \coloneqq \frac{\widehat{f}(0_{\mathbf{X}_k})-\widehat{f}(\eta_k)}
	{m_{\mathbf{X}_k}(0_{\mathbf{X}_k})-m_{\mathbf{X}_k}(\eta_k)}.
\end{equation}
If $\rho_k$ is small (convergence theory suggests that $\rho'<\tfrac{1}{4}$ is an appropriate threshold), then the model is very inaccurate: the step must be rejected, and the trust-region radius
$\varDelta_k$ must be reduced. If $\rho_k$ is small but less dramatically so, then the step is accepted but the trust-region 
radius is reduced. If $\rho_k$ is close to 1, then there is a good agreement between the model and the function over the 
step, and the trust-region radius can be expanded. If $\rho_k\gg 1$, then the model is inaccurate, but the overall 
optimization iteration is producing a significant decrease in the cost. If this is the case and the restriction in
\eqref{eqn:model_problem} is active, we can try to expand the trust-region radius as long as we stay below a predefined
bound $\bar{\varDelta}>0$. This method is summarized in Algorithm~\ref{alg:rtr}, cf. \cite[Algorithm~1]{absil2007trust}.

\begin{algorithm}
	\caption{Riemannian trust-region method for $\Mfr$} \label{alg:rtr}
	\begin{algorithmic}[1]
		\Require{Initial iterate $x_0 \in \Mf$;
		parameters $\bar{\varDelta}>0$, $\varDelta_0 \in(0,\bar{\varDelta})$, $\rho' \in (0,\tfrac{1}{4})$.}
			\For{$k=0$ \textbf{until} convergence}
				\State Obtain $\eta_k$ by approximately solving \eqref{eqn:model_problem}
				\State $<$Test for convergence$>$
				\State Evaluate $\rho_k$ from \eqref{eqn:rho}
				\If {$\rho_k<\tfrac{1}{4}$}
					\State $\varDelta_{k+1} = \tfrac{1}{4}\varDelta_k$
				\ElsIf {$\rho_k > \tfrac{3}{4}$ and $\norm{\eta_k} = \varDelta_k$}
					\State $\varDelta_{k+1} = \min(2\varDelta_k,\bar{\varDelta})$
				\Else
					\State $\varDelta_{k+1} = \varDelta_k$
				\EndIf
				\If {$\rho_k>\rho'$}
					\State $\mathbf{X}_{k+1} = R_{\mathbf{X}_k}(\eta_k)$
				\Else
					\State $\mathbf{X}_{k+1} = \mathbf{X}_k$
				\EndIf
			\EndFor
	\end{algorithmic}
\end{algorithm}

There exist different strategies for (approximately) solving the trust-region subproblems \eqref{eqn:model_problem}. We apply a truncated CG
method \cite[Algorithm~2]{absil2007trust}, which is a straightforward adaptation of Steighaug's method \cite{steihaug1983conjugate} for problems in $\R^n$. It ensures that the CG method is stopped after a fixed maximal
number of iterations $K_{\max}$. Since a CG iteration just requires a fixed number of matrix-vector products, the total
cost of the trust-region method with exact Hessian evaluation is given by
\begin{equation*}
	\mathcal{O}\big(K_{\max}(r^d(|\varOmega|+n)+r^{d+1})\big)
\end{equation*}

The convergence theory follows standard techniques from Euclidean optimization \cite{conn2000trust}. Under some technical assumptions, it can be shown
that Algorithm~\ref{alg:rtr} converges globally to a stationary point \cite[Theorem~4.4]{absil2007trust} of $f$. Locally superlinear convergence to a
nondegenerate local minimum can be shown
\cite[Theorem~4.12]{absil2007trust} as long as the quadratic term in $m_{\mathbf{X}_k}$ is a sufficiently good Hessian approximation of $f$.

In general, we cannot rule out Algorithm~\ref{alg:rtr} converging to a nonregular minimum if
${|\varOmega|<\dim(\Mfr)}$. If this causes problems, we can enforce positive-definiteness
of the Hessian by considering a cost function regularized with an identity term
\begin{equation*}
	f_\mu(\mathbf{X}) = \frac{1}{2} \normb{\Proj_\varOmega\mathbf{X}-\Proj_\varOmega\mathbf{A}}^2
	+ \frac{\mu}{2}\norm{\mathbf{X}}^2,
\end{equation*}
for some $\mu>0$. However, such a problem may not be well-posed since there is not enough information provided to recover
$\mathbf{X}$ in a meaningful way. Moreover, in our practical experiments we did not have a need to use this regularization.

\section{Numerical experiments} \label{sec:experiments}

We implemented our method in \textsc{Matlab} version 2015b using the Tensor Toolbox version 2.6 \cite{bader2006algorithm,bader2015matlab} for the basic tensor arithmetic 
and Manopt version 3.0 \cite{boumal2014manopt} for handling the Riemannian trust-region scheme. All tests were performed on a quad-core Intel i7-2600 CPU with 8 GB
of RAM running 64-Bit Ubuntu 16.04 Linux.

In Algorithm~\ref{alg:rtr}, we choose the standard parameters $\bar{\varDelta}=\dim(\Mfr)$, $\varDelta_0=\bar{\varDelta}/8$, $\rho'=0.1$.
The initial guess $\mathbf{X}_0$ is generated randomly by a uniform distribution on $(0,1)$ for each entry in the factors in the Tucker decomposition.
We apply a QR factorization in each mode to ensure that the basis matrices are orthogonal. The sampling set $\varOmega$ is chosen from a uniform distribution
on the index set.

\subsection{Uniformly distributed random data} \label{subsec:uniform_data}

\begin{figure}
	\begin{tabular}{ccc}
		\includegraphics[scale=0.4]{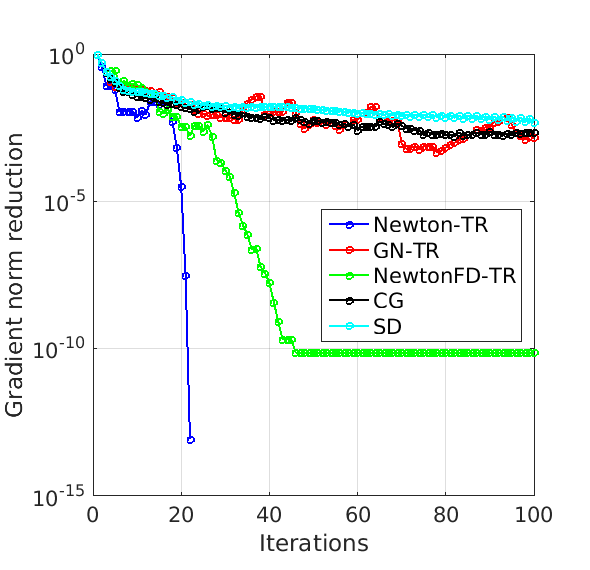}&
		\includegraphics[scale=0.4]{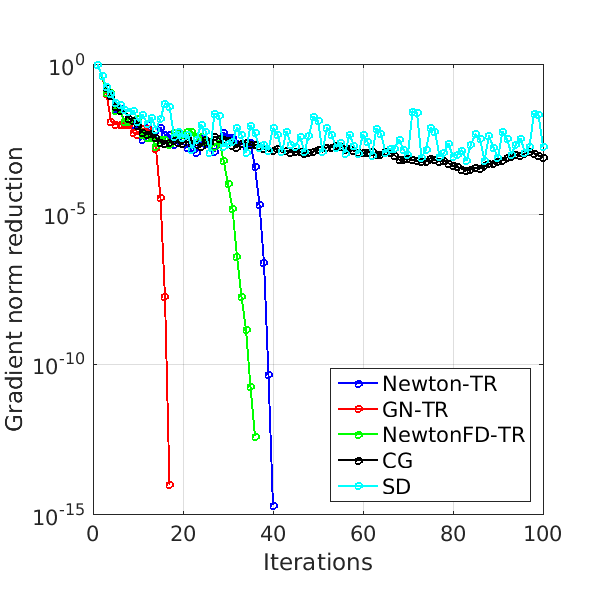}\\
		$|\varOmega| = 0.05\times\prod_i n_i$, $\mathbf{A}\notin\Mfr$&
		$|\varOmega| = 0.05\times\prod_i n_i$, $\mathbf{A}\in\Mfr$\\
		\includegraphics[scale=0.4]{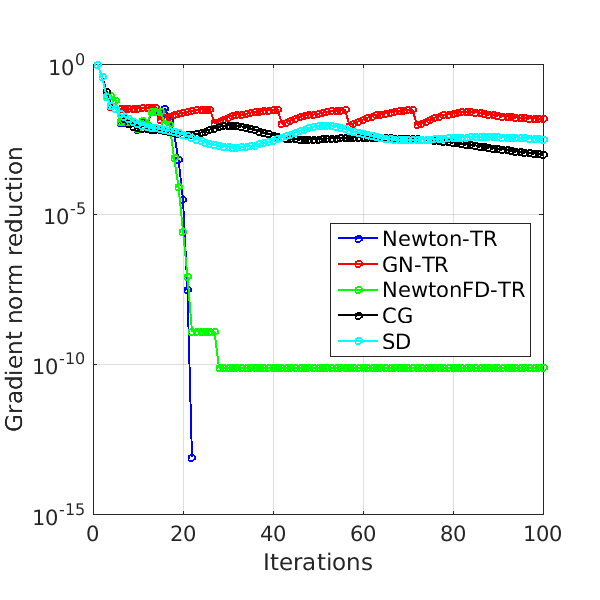}&
		\includegraphics[scale=0.4]{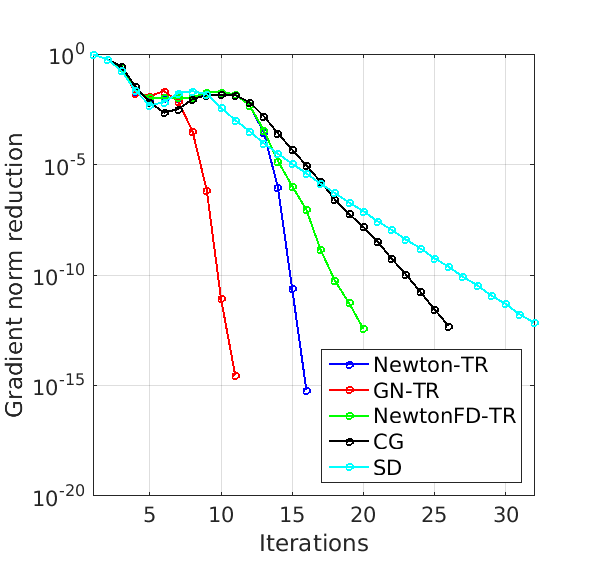}\\
		$|\varOmega| = 0.5\times\prod_i n_i$, $\mathbf{A}\notin\Mfr$&
		$|\varOmega| = 0.5\times\prod_i n_i$, $\mathbf{A}\in\Mfr$
	\end{tabular}
	\caption{Convergence of Riemannian methods for \eqref{eqn:problem} with $n_i \equiv 20$ and $r_i \equiv 2$.}
	\label{fig:convergence}
\end{figure}

We test the convergence behaviour of Algorithm~\ref{alg:rtr} for the recovery of a partially known tensor $\mathbf{A}$ with uniformly distributed entries. We observe that the trust-region method with exact Hessian computation yields
superlinear convergence after a small number of iterations in all cases observed here. The finite difference Hessian
approximation shows similar behaviour, however, the convergence is slower and becomes less reliable for a large
gradient norm reduction. The Gauß--Newton Hessian approximation shows shows superlinear convergence behaviour if
$\mathbf{A}\in\Mfr$, but not in the case $\mathbf{A}\notin\Mfr$, as predicted in the previous sections. The state-of-the-art
Riemannian method, nonlinear CG \cite{kressner2014low}, shows linear convergence with convergence rate superior to steepest
descent, but the convergence rate may slow, especially in the case of noise.

\subsection{Survey data} \label{subsec:survey_data}

\begin{figure}
	\begin{tabular}{ccc}
		\includegraphics[scale=0.4]{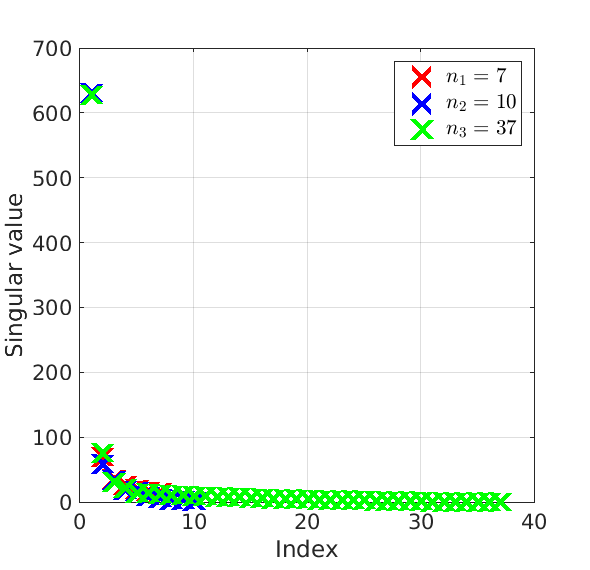}&
		\includegraphics[scale=0.4]{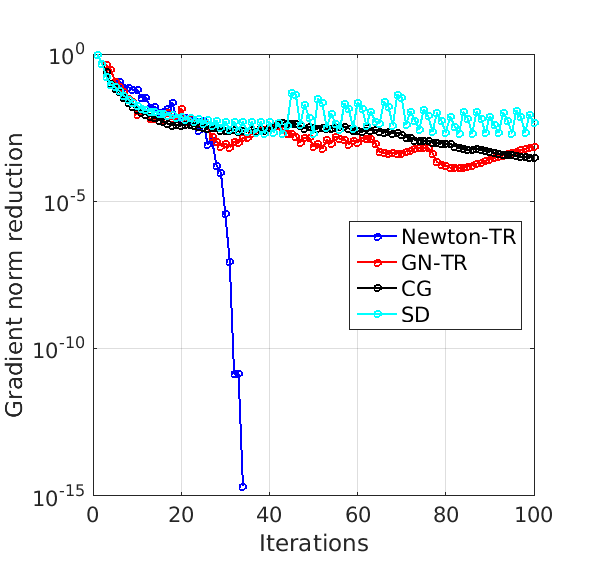}
	\end{tabular}
	\caption{Left: Singular values of the data set \cite{kroonenberg2017information}; right: convergence of Riemannian
	methods for tensor completion with $|\varOmega| = 0.5\times\prod_i n_i$ and $\mathbf{r}=(3,5,5)$.}
	\label{fig:bus}
\end{figure}

In survey statistics, data in the form of order-$3$ tensors arises in a natural way: for $n_1$ of individuals, $n_2$ 
properties are collected over $n_3$ time points; see, for example, \cite{kroonenberg1983three}. We choose a standard data 
set \cite{kroonenberg2017information}, containing reading proficiency test measures of schoolchildren over a period of time. 
A typical problem in such data sets in practice is missing entries, resulting from nonresponse or failure to enter some of 
the data points correctly; see \cite{little2014statistical}. A typical application case is a sampling set greater or equal
to haf the total tensor size. As Figure~\ref{fig:bus}
shows, data of this type shows rapidly decaying singular values, especially in the time mode ($i=3$) and our trust-region
method can be used to retrieve deleted data in a low-rank framework. The trust-region method also converges superlinearly
in this case. The simplified Gauß--Newton trust-region scheme does not show superlinear convergence since noise is present
in this application case. The
trust-region methods also compares favorably with nonlinear CG in this case. Our results can be seen as an indication that
Riemannian trust-region methods can be used for statistical data recovery.

\section{Conclusions and discussion} \label{sec:conclusion}

We have derived the Riemannian Hessian for functions on the manifold of tensors of fixed multilinear rank in Tucker format. We have shown
that it can be used to construct a rapidly and robustly converging trust-region scheme for tensor completion. Furthermore,
this is the first theoretical result on the second-order properties of the given manifold; we believe this to be useful
for an improved understanding of the underlying geometry. Our numerical results also indicate that Riemannian optimization
is a suitable technique for the recovery of missing entries from multilinear survey data with low-rank structure.
We believe that this aspect merits further exploration; a comparison of Riemannian techniques with standard imputation
methods from statistics \cite{little2014statistical} may reveal opportunities and limitations of this approach. For this,
a better understanding of the sensitivity of the Tucker decomposition to perturbations is required.

Another well-known way to obtain superlinear convergence is a Riemannian BFGS method. In recent research, several
schemes have been proposed, generalizing this standard method from Euclidean optimization to the Riemannian case; see
\cite[Subsection~5.2]{huang2016intrinsic} for an application to the manifold of matrices of fixed rank. Extending this
idea to tensors merits some examination. For high-dimensional applications with $d\gg 3$, hierarchical tensor formats
\cite{uschmajew2013geometry,grasedyck2013literature} are crucial; see \cite{silva2015optimization} for a Riemannian 
optimization approach.

\section*{Acknowledgements}

The authors thank Lars Grasedyck and Bart Vandereycken for fruitful discussions. Jan Pablo Burgard pointed out the applicability of this work to data from survey statistics in general and to the data set \cite{kroonenberg2017information} in particular. The first author has been supported by the German 
Research Foundation (DFG) within the Research Training Group 2126: `Algorithmic Optimization'.

\bibliographystyle{wileyj}
\bibliography{heideltensor}

\end{document}